\documentclass[preprint,12pt]{elsarticle}
 
\usepackage[colorlinks=true,linkcolor=blue,citecolor=blue]{hyperref}
\usepackage{amsmath}
\usepackage{amsfonts}
\usepackage{amssymb}
\usepackage{amsthm}
\usepackage{booktabs}
\usepackage{multirow}
\usepackage{float}
\usepackage{graphicx} 
\usepackage{bm}
\usepackage{xcolor}
\usepackage{subcaption}
\usepackage[margin=3cm]{geometry}

\newtheorem{theorem}{Theorem}[section]
\newtheorem{lemma}{Lemma}[section]
\newtheorem{remark}{Remark}[section]

\begin{document}

\begin{frontmatter}

\title{Fast Jacobi Spectral Methods and Closure Approximations for the Homogeneous FENE Model of Complex Fluids}

\affiliation[inst1]{organization={State Key Laboratory of Mathematical Sciences (SKLMS) \& LSEC, Institute of Computational Mathematics and Scientific/Engineering Computing, Academy of Mathematics and Systems Science, Chinese Academy of Sciences},
            city={Beijing},
            postcode={100190}, 
            country={China}}

\affiliation[inst2]{organization={School of Mathematical Sciences, University of Chinese Academy of Sciences},
            city={Beijing},
            postcode={100049}, 
            country={China}}
            
\affiliation[inst3]{organization={School of Mathematical Sciences, Eastern Institute of Technology},
            city={Ningbo},
            postcode={515200}, 
            state={Zhejiang},
            country={China}}

\author[inst2,inst1]{Runkai Feng}
\ead{fengrunkai@amss.ac.cn}

\author[inst3]{Jie Shen}
\ead{jshen@eitech.edu.cn}

\author[inst1,inst2]{Haijun Yu\corref{cor1}}
\ead{hyu@lsec.cc.ac.cn}

\cortext[cor1]{Corresponding author}

\begin{abstract}
The Finitely Extensible Nonlinear Elastic (FENE) dumbbell model is a widely used mathematical model for complex fluids.
Direct simulation of the FENE Fokker--Planck equation is computationally challenging due to high dimensionality and singularity of its potential. In this paper, we develop two fast Jacobi-Spherical Harmonic spectral methods for the spatially homogeneous FENE Fokker--Planck equation. These methods effectively resolve the singularity near the boundary by combining properly designed Jacobi polynomials with a weighted variational formulation. A semi-implicit backward differentiation formula of second-order (BDF2) is employed for time marching, and its energy stability is rigorously proved. The resulting linear algebraic system possesses a sparse structure and can be efficiently solved. Numerical results verify the spectral convergence and efficiency of the direct spectral solvers, establishing them as a reliable tool for generating reference solutions for challenging benchmark problems. Furthermore, to achieve an optimal trade-off between accuracy and efficiency, we compare several closure approximation models, including the industry workhorse Peterlin approximation (FENE-P), the quasi-equilibrium approximation (FENE-QE), and a novel neural network implementation for FENE-QE proposed in this paper (FENE-QE-NN). Numerical experiments in extensional and shear flows demonstrate the superior accuracy and efficiency of the proposed methods compared to traditional approaches.
\end{abstract}

\begin{keyword}
FENE model \sep complex fluids \sep spectral methods \sep Fokker--Planck equation \sep closure approximation \sep neural networks
\end{keyword}

\end{frontmatter}

\section{Introduction}

Complex fluids, composed of polymers or other macromolecules, exhibit rich and intricate dynamical behavior that cannot be captured by classical Newtonian fluid models. The macroscopic behaviors of such non-Newtonian fluids are intimately linked to the microscopic configuration of polymer molecules. The Finitely Extensible Nonlinear Elastic (FENE) dumbbell model (cf., for instance, \cite{Warner72,Herrchen97,Barrett09} and the references therein) is one of the most widely adopted nonlinear kinetic models. Coupled with the Navier--Stokes equations, this model is essential for investigating complex rheological phenomena, such as viscoelastic turbulence and drag modification mechanisms. For instance, recent numerical simulations have utilized viscoelastic models to reveal the existence of a maximum drag enhancement asymptote in turbulent Taylor--Couette flows \cite{Liu24}, highlighting the critical interplay between polymer dynamics and vortical structures. However, accurate simulation of such multiscale systems relies heavily on the efficient solution of the underlying Fokker--Planck equation.

Let the positions of the two ends of the dumbbell be $\bm{r}_1$ and $\bm{r}_2$. The center of mass is denoted by $\bm{x}=(\bm{r}_1+\bm{r}_2)/2$, and the orientation vector of the molecule is $\bm{q}=\bm{r}_1-\bm{r}_2$. In kinetic theory, the configuration distribution function, denoted as $f(t,\bm{x},\bm{q})$, describes the probability density of molecules with orientation $\bm{q}$ at position $\bm{x}$ and time $t$. The microscopic system of complex fluids is governed by the Fokker--Planck equation:
\begin{equation}
    \frac{\partial f}{\partial t}+\boldsymbol{u}\cdot\nabla_{\bm{x}}f+\nabla_{\bm{q}}\cdot(\bm{K}\cdot \bm{q}f)=\frac{1}{\text{De}}\Delta_{\bm{q}}f+\frac{1}{\text{De}}\nabla_{\bm{q}}\cdot\left(f\nabla_{\bm{q}}U\right),\label{eq:FP eq}
\end{equation}
where $\bm{K}=(\nabla_{\bm{x}}\bm{u})^\top$, and $U$ is the FENE elastic potential energy given by:
\begin{equation}
    U=-\frac{b}{2}\ln(1-|\bm{q}|^2).
\end{equation}
The system involves dimensionless parameters $b$ and the Deborah number ($\text{De}$), defined as:
\begin{equation}
    b=\frac{Hq_0^2}{k_BT},\quad \text{De}=\frac{\zeta q_0^2}{2t_0k_BT}.
\end{equation}
Here, $q_0$ is the maximum extensibility of the dumbbell, $k_B$ is the Boltzmann constant, $T$ is the temperature, $H$ is the FENE spring constant, $\zeta$ is the friction coefficient of the polymer molecule, and $t_0$ is the characteristic time of the background flow.

While the Fokker--Planck equation \eqref{eq:FP eq} is linear with respect to $f$ for a given $\bm{K}$, the unbounded nature of the FENE potential $U$ introduces a logarithmic singularity as $|\bm{q}| \to 1$. This singularity poses significant challenges for both rigorous mathematical analysis and stable numerical approximations. Early theoretical works focused on local well-posedness. But, substantial progress has been made recently regarding the global existence, regularity, and rigorous proof of convergence to equilibrium states by exploiting the system's entropy dissipation structure \cite{Blaustein24,Yang22,Masmoudi13}.

From a numerical perspective, kinetic theory is typically approached via two distinct paradigms. The stochastic approach, such as CONNFFESSIT and the Brownian configuration fields (BCF) method (see, e.g., \cite{Hulsen97,Laso93}), relies on Monte Carlo sampling. While the computational cost grows mildly with degrees of freedom, its accuracy is inherently limited by statistical noise ($O(1/\sqrt{N})$). Conversely, the deterministic approach involves solving the governing equations directly. Recent advancements in finite difference methods have achieved high fidelity in simulating viscoelastic turbulence by incorporating distinct criteria to ensure the positive definiteness of the conformation tensor \cite{Liu22}. However, for the microscopic Fokker--Planck equation, spectral methods remain the favored choice due to their exponential convergence rates, which are crucial for resolving the steep gradients of the distribution function near the boundaries \cite{LiuC21,LiuC24,Shen12}. Previous work \cite{Shen12} has proven the well-posedness of the weak formulation and designed a fast spectral method for the homogeneous two-dimensional (0+2) case. Additionally, the convection term in \eqref{eq:FP eq} can be treated by a classical finite difference scheme or a spectral element method; see, e.g., \cite{Barrett09,Chauviere04b}.

Due to the high dimensionality, directly solving the Fokker--Planck equation is computationally expensive, especially in 0+3 dimensions. To address this, researchers often resort to closure approximations~\cite{Du05}, with the FENE-P model \cite{Bird87} being the most common. Of particular interest is the FENE-QE (Quasi-Equilibrium) model \cite{Ilg02}, which offers significantly higher accuracy. However, despite dimension reduction, models like FENE-QE remain computationally intensive due to the need to evaluate complex integrals at every time step. To mitigate this, techniques such as piecewise linear approximation (PLA) \cite{Wang08} have been proposed to improve efficiency.

Recently, data-driven approaches, particularly neural networks, have shown great potential in approximating complex constitutive laws. In the context of non-Newtonian fluid mechanics, the closure problem, mapping the microstructural state to the macroscopic stress, represents a high-dimensional function approximation task well-suited for deep learning. Classical analytical closures, such as the Peterlin approximation, often sacrifice accuracy for computational efficiency. In contrast, neural networks, leveraging their universal approximation capability, can learn high-fidelity constitutive manifolds from data generated by direct kinetic solvers. Several studies have successfully applied NNs to learn stress-strain relationships or to construct closure terms for Reynolds stress in turbulence modeling~\cite{Brunton20,Duraisamy19,McConkey22}.

In this paper, we develop two fast Jacobi-Spherical Harmonic spectral-Galerkin methods for the 0+3 dimensional Fokker--Planck equation. We strictly prove the stability of the semi-implicit backward differentiation formula of second-order (BDF2) time discretization scheme and verify its convergence through rigorous numerical benchmarks. Furthermore, we propose a neural network approach (FENE-QE-NN) to approximate the FENE-QE model, leveraging the high-fidelity data generated by our spectral solver. Numerical experiments in extensional and shear flows demonstrate the accuracy and efficiency of these methods, followed by appendices containing some calculations on the matrix of the spectral-Galerkin method.

\section{The FENE Fokker--Planck equation and its weighted weak form}

Consider a homogeneous system characterized by a three-dimensional configuration space defined on the unit ball $\Omega=\{\bm{q} \in \mathbb{R}^3: |\bm{q}| \le 1\}$. The configuration distribution function is denoted by $f(\bm{q},t)$. The macroscopic velocity gradient tensor $\bm{K}=(\nabla_{\bm{x}}\boldsymbol{u})^\top$ is given by
\begin{equation*}
\bm{K}=
\begin{pmatrix}
k_{11} & k_{12} & k_{13} \\
k_{21} & k_{22} & k_{23} \\
k_{31} & k_{32} & k_{33}
\end{pmatrix},
\quad \text{with } \mathrm{tr}(\bm{K}) = k_{11}+k_{22}+k_{33}=0,
\end{equation*}
which implies that the flow is incompressible. We decompose $\bm{K}$ into its symmetric and antisymmetric parts: the rate-of-strain tensor $\bm{D}=(\bm{K}+\bm{K}^\top)/2$ and the vorticity tensor $\bm{W}=(\bm{K}-\bm{K}^\top)/2$.

The Fokker--Planck equation governing the evolution of $f$ for the FENE model is given by
\begin{equation}
    \frac{\partial f}{\partial t}+\nabla_{\bm{q}}\cdot(\bm{K}\cdot \bm{q}f)=\frac{1}{\mathrm{De}}\Delta_{\bm{q}}f+\frac{b}{\mathrm{De}}\nabla_{\bm{q}}\cdot\left(f\frac{\bm{q}}{1-|\bm{q}|^2}\right),\label{eq:FP equation}
\end{equation}
where $\mathrm{De}$ denotes the Deborah number. The associated free energy functional is defined as
\begin{equation}
    A[f]=\int_{\Omega}(f\ln f-f)\,\mathrm{d}\bm{q}+\int_{\Omega}\left\{U(\bm{q})f - f\frac{\mathrm{De}}{2}\bm{D}:\bm{q}\bm{q}\right\}\,\mathrm{d}\bm{q},
\end{equation}
where $U(\bm{q})$ represents the FENE spring potential.

It is well-known that for irrotational flows ($\bm{W}=\bm{0}$), the solution to \eqref{eq:FP equation} converges to a steady state that minimizes the free energy \cite{Warner72}:
\begin{equation}
    f_{eq}^{\bm{D}}(\bm{q})=\frac{1}{Z_{\bm{D}}}(1-|\bm{q}|^2)^{b/2}
    \exp\left(\frac{\mathrm{De}}{2}\bm{D}:\bm{q}\bm{q}\right),\label{eq:f_eq^D}
\end{equation}
where $Z_{\bm{D}}$ is the normalization constant. In the absence of flow ($\bm{K}=\bm{0}$), this reduces to the equilibrium distribution:
\begin{equation}
    f_{eq}(\bm{q})=C_{eq}(1-|\bm{q}|^2)^{b/2},\quad C_{eq}=\frac{\Gamma(b/2+5/2)}{2\pi^{3/2}\Gamma(b/2+1)}.\label{eq:f_eq}
\end{equation}

Multiplying \eqref{eq:FP equation} by a test function $g \in C_0^{\infty}(\Omega)$ and applying integration by parts, we obtain the standard weak formulation:
\begin{equation}
    \left(\frac{\partial f}{\partial t},g\right)+\frac{1}{\mathrm{De}}\left(\nabla f,\nabla g\right)+\frac{b}{\mathrm{De}}\left(f,\frac{\bm{q}\cdot \nabla g}{1-|\bm{q}|^2}\right)=(f,(\bm{K}\cdot\bm{q})\cdot \nabla g).\label{eq:weak formulation1}
\end{equation}
To handle the singularity at the boundary $|\bm{q}|=1$, we introduce the transformation
\begin{equation*}
f(\bm{q},t)=(1-|\bm{q}|^2)^s h(\bm{q},t),\quad 1<s\leq b/2.
\end{equation*}
Substituting this into \eqref{eq:weak formulation1}, we derive the weighted weak formulation for the new unknown $h\in H^1_s(\Omega)$:
\begin{equation}
    \left(\frac{\partial h}{\partial t},g\right)_s+\frac{1}{\mathrm{De}}\left(\nabla h,\nabla g\right)_s+\frac{b-2s}{\mathrm{De}}\left(h,\frac{\bm{q}\cdot \nabla g}{1-|\bm{q}|^2}\right)_s=(h,(\boldsymbol{K}\cdot\bm{q})\cdot \nabla g)_s,\label{eq:weak formulation2}
\end{equation}
where the weighted inner product is defined as $(h,g)_s=\int_{\Omega}hg(1-|\bm{q}|^2)^s\,\mathrm{d}\bm{q}$, and $H_s^1(\Omega)$ is the weighted Sobolev space (see \cite{Kufner85}). The well-posedness and energy stability of this formulation have been established in \cite{Shen12}.

In spherical coordinates ($q_1=r\sin\theta \cos\varphi, q_2=r\sin\theta \sin\varphi, q_3=r\cos\theta$), the domain becomes $\Sigma=\{(r,\theta,\varphi): r\in[0,1], \theta\in[0,\pi], \varphi\in[0,2\pi)\}$. The inner product transforms to
\begin{equation}
    \left(h,g\right)_s=\int_{\Sigma}hg(1-r^2)^s r^2\sin\theta \,\mathrm{d}r\mathrm{d}\theta \mathrm{d}\varphi.
\end{equation}
Consequently, the weak formulation \eqref{eq:weak formulation2} is rewritten as
\begin{equation}
\begin{aligned}
    \left(\frac{\partial h}{\partial t},g\right)_s &+\frac{1}{\mathrm{De}}\left(h_rg_r+\frac{h_{\theta}g_{\theta}}{r^2}+
    \frac{h_{\varphi}g_{\varphi}}{r^2\sin^2\theta}\right)_s+\frac{b-2s}{\mathrm{De}}\left(h,\frac{rg_r}{1-r^2}\right)_s\\
    &= k_{ij}\left(h, r g_r \hat{r}_i \hat{r}_j + g_{\theta}\hat{\theta}_i \hat{r}_j + \frac{g_{\varphi}\hat{\varphi}_i \hat{r}_j}{\sin\theta}\right)_s, \label{eq:weak formulation3}
\end{aligned}
\end{equation}
where the Einstein summation convention is assumed for indices $i,j$. Here, $\hat{r}_i, \hat{\theta}_i, \hat{\varphi}_i$ denote the components of the unit vectors $\hat{\bm{r}}, \hat{\bm{\theta}}, \hat{\bm{\varphi}}$ defined below.

To facilitate the spectral approximation, we apply the coordinate mapping
\begin{equation}
    r^2=\frac{1+p}{2}, \quad p\in [-1,1].
\end{equation}
Let $\Sigma'=[-1,1]\times[0,\pi]\times[0,2\pi)$. This transformation, utilized in \cite{Chauviere04a,Shen12}, offers two primary advantages: (i) it redistributes Gauss quadrature points away from the pole, mitigating clustering issues; and (ii) it reduces the polynomial degree in the weight function by half. Applying this mapping (noting that $g_r=4r g_p$) leads to the final weak formulation:
\begin{equation}
    \begin{aligned}
        \left(\frac{\partial h}{\partial t},g\right)_{s,\frac{1}{2}} &+ \frac{1}{\mathrm{De}}\left(16\frac{1+p}{2}h_pg_p+\frac{2}{1+p}\left(h_{\theta}g_{\theta}+\frac{h_{\varphi}g_{\varphi}}{\sin^2\theta}\right)\right)_{s,\frac{1}{2}} \\
        &+ \frac{b-2s}{\mathrm{De}}\left(h,4\frac{1+p}{1-p}g_p\right)_{s,\frac{1}{2}} \\
        &= k_{ij}\left(h, 4\frac{1+p}{2}g_p \hat{r}_i \hat{r}_j + g_{\theta}\hat{\theta}_i \hat{r}_j + \frac{g_{\varphi}\hat{\varphi}_i \hat{r}_j}{\sin\theta}\right)_{s,\frac{1}{2}}, \label{eq:weak formulation4}
    \end{aligned}
\end{equation}
where the generalized inner product is defined as
\begin{equation}
    (h,g)_{\alpha,\beta}=\int_{-1}^1\int_0^{\pi}\int_0^{2\pi}hg(1-p)^{\alpha}(1+p)^{\beta}\sin\theta\,\mathrm{d}p\mathrm{d}\theta\mathrm{d}\varphi.
\end{equation}
The unit vectors involved in the convection term are given by
\begin{equation*}
\begin{aligned}
    \hat{\bm{r}}&=(\sin\theta \cos\varphi,\sin\theta \sin\varphi,\cos\theta)^\top,\\
    \hat{\bm{\theta}}&=(\cos\theta \cos\varphi,\cos\theta \sin\varphi,-\sin\theta)^\top,\\
    \hat{\bm{\varphi}}&=(-\sin\varphi,\cos\varphi,0)^\top.
\end{aligned}
\end{equation*}
  
\section{Time discretization and stability analysis}

We employ a second-order Backward Differentiation Formula (BDF2) for time discretization. Let $\Delta t > 0$ be the time step size, and let $h^n$ denote the numerical approximation at time $t_n = n\Delta t$. To decouple the system and maintain efficiency, we apply a second-order explicit linear extrapolation to the convection term $(h,(\boldsymbol{K}\cdot\boldsymbol{q})\cdot \nabla g)_s$. The semi-discrete variational scheme reads as follows: for any test function $g \in H_s^1(\Omega)$, find $h^{n+1}\in H_s^1(\Omega)$ such that
\begin{equation} 
\label{eq:BDF2}
\begin{aligned}
    & \frac{1}{2\Delta t}\left(3h^{n+1}-4h^{n}+h^{n-1}, g\right)_s + \frac{1}{\text{De}}\left(\nabla h^{n+1},\nabla g\right)_s \\
    &+\frac{b-2s}{\text{De}}\left(h^{n+1},\frac{\bm{q}\cdot \nabla g}{1-|\bm{q}|^2}\right)_s 
    = (2h^{n}-h^{n-1}, (\bm{K}\cdot\bm{q})\cdot\nabla g)_s.
\end{aligned}
\end{equation}

We first address the solvability of the semi-discrete scheme \eqref{eq:BDF2}. At each time step $t_{n+1}$, given $h^n$ and $h^{n-1}$, the problem reduces to a linear elliptic problem. Let $a(\cdot, \cdot): H_s^1(\Omega) \times H_s^1(\Omega) \to \mathbb{R}$ be the bilinear form defined by the left-hand side of \eqref{eq:BDF2}:
\begin{equation}
    a(u, v) = \frac{3}{2\Delta t}(u, v)_s + \frac{1}{\text{De}}(\nabla u, \nabla v)_s + \frac{b-2s}{\text{De}}\left(u, \frac{\bm{q}\cdot \nabla v}{1-|\bm{q}|^2}\right)_s.
\end{equation}
The linear functional on the right-hand side is denoted by $L(v)$. To prove the coercivity of $a(\cdot, \cdot)$, we recall a crucial identity derived in \cite{Shen12}.

\begin{lemma}
\label{lem:Shen_identity}
For any $h \in H_s^1(\Omega)$ and $s > 1$, the following identity holds:
\begin{equation} \label{eq:identity}
    \left(h, \frac{\bm{q}\cdot \nabla h}{1-|\bm{q}|^2}\right)_s = (s-1)\|\bm{q}h\|_{s-2}^2 - \frac{3}{2}\|h\|_{s-1}^2.
\end{equation}
Furthermore, by analyzing the quadratic form associated with the weighted norms, there exists a threshold $\gamma(s)$ such that for any $\gamma > \gamma(s)$, the following inequality holds:
\begin{equation} \label{eq:positivity}
    \gamma \|h\|_s^2 + \left(h, \frac{\bm{q}\cdot \nabla h}{1-|\bm{q}|^2}\right)_s \ge \beta(\gamma)\|h\|_{s-2}^2,
\end{equation}
where 
\begin{equation}
    \gamma(s) = \frac{3}{4}\left(1 + \frac{s-1}{3} + \frac{3}{4(s-1)}\right), \quad \beta(\gamma) = (s-1)\left(1 - \frac{\gamma(s)}{\gamma}\right).
\end{equation}
\end{lemma}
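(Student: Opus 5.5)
The plan is to prove the identity \eqref{eq:identity} by a single integration by parts, and then to obtain \eqref{eq:positivity} from a pointwise lower bound on a scalar quadratic in the variable $t=1-|\bm{q}|^2$.

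\textbf{The identity.} First take $h$ smooth up to the boundary and write $h\,\bm{q}\cdot\nabla h=\tfrac12\,\bm{q}\cdot\nabla(h^2)$. Then
\begin{equation*}
\left(h,\frac{\bm{q}\cdot\nabla h}{1-|\bm{q}|^2}\right)_s=\frac12\int_\Omega \bm{q}\cdot\nabla(h^2)\,(1-|\bm{q}|^2)^{s-1}\,\mathrm{d}\bm{q}.
\end{equation*}
Integrating by parts, the boundary term carries the factor $(1-|\bm{q}|^2)^{s-1}$. Since $s>1$, this factor vanishes on $|\bm{q}|=1$, so the boundary term drops. For the divergence we compute
\begin{equation*}
\nabla\cdot\big(\bm{q}(1-|\bm{q}|^2)^{s-1}\big)=3(1-|\bm{q}|^2)^{s-1}-2(s-1)|\bm{q}|^2(1-|\bm{q}|^2)^{s-2}.
\end{equation*}
Multiplying by $-\tfrac12 h^2$ and integrating gives exactly $(s-1)\|\bm{q}h\|_{s-2}^2-\tfrac32\|h\|_{s-1}^2$. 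To pass to general $h\in H^1_s(\Omega)$, I would use density of smooth functions in $H^1_s(\Omega)$ (cf.\ \cite{Kufner85}). I would also need both sides to be continuous in the $H^1_s$ norm; in particular $\|h\|_{s-2}$ must be controlled by $\|h\|_{H^1_s}$ through a weighted Hardy-type inequality. This justification is the only delicate point, and I would take it from \cite{Shen12,Kufner85}.

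\textbf{The inequality.} Write every term with the common weight $(1-|\bm{q}|^2)^{s-2}$ and set $t=1-|\bm{q}|^2\in[0,1]$. Then $\|h\|_s^2=\int h^2t^2\,t^{s-2}$, $\|\bm{q}h\|_{s-2}^2=\int h^2(1-t)\,t^{s-2}$ and $\|h\|_{s-1}^2=\int h^2t\,t^{s-2}$. The left side of \eqref{eq:positivity} therefore equals
\begin{equation*}
\int_\Omega h^2\,\phi(t)\,(1-|\bm{q}|^2)^{s-2}\,\mathrm{d}\bm{q},\qquad \phi(t)=\gamma t^2-\left(s+\tfrac12\right)t+(s-1).
\end{equation*}
It then suffices to bound $\phi(t)\ge\beta(\gamma)$ for all $t$. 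The global minimum of $\phi$ is
\begin{equation*}
(s-1)-\frac{(s+\frac12)^2}{4\gamma}.
\end{equation*}
Expanding with $s+\tfrac12=(s-1)+\tfrac32$ gives
\begin{equation*}
\frac{(s+\frac12)^2}{4(s-1)}=\frac{s-1}{4}+\frac34+\frac{9}{16(s-1)}=\gamma(s).
\end{equation*}
Hence the minimum of $\phi$ equals $(s-1)\big(1-\gamma(s)/\gamma\big)=\beta(\gamma)$, which is positive precisely when $\gamma>\gamma(s)$. If the vertex of $\phi$ falls outside $[0,1]$, the minimum over $[0,1]$ is only larger, so the bound still holds. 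Integrating $\phi\ge\beta(\gamma)$ against $h^2(1-|\bm{q}|^2)^{s-2}$ then yields \eqref{eq:positivity}.
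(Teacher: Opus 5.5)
Your proposal is correct and follows the route the paper indicates. The identity is the integration by parts of $\tfrac12\bm{q}\cdot\nabla(h^2)$ against $(1-|\bm{q}|^2)^{s-1}$ from \cite{Shen12}, and the paper's ``analysis of the quadratic form'' is exactly your pointwise minimization of $\phi(t)=\gamma t^2-(s+\tfrac12)t+(s-1)$, whose vertex value gives $\gamma(s)=(s+\tfrac12)^2/(4(s-1))$ and $\beta(\gamma)$; like the paper, you defer the density and Hardy-type justification ($\|h\|_{s-2}\lesssim\|h\|_{H^1_s}$ for $s>1$) to \cite{Shen12,Kufner85}.
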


\begin{theorem}[Existence and Uniqueness]
Assume $1 < s \le b/2$. If $s < b/2$, we further assume the time step $\Delta t$ satisfies the condition $\frac{3}{2\Delta t} > \frac{b-2s}{\mathrm{De}}\gamma(s)$. Then, for any given $h^n, h^{n-1} \in L^2_s(\Omega)$, the variational problem \eqref{eq:BDF2} admits a unique solution $h^{n+1} \in H_s^1(\Omega)$.
\end{theorem}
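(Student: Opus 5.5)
We will apply the Lax--Milgram theorem to $a(h^{n+1},g)=L(g)$ on $H_s^1(\Omega)$. For the weak formulation we need a suitable norm on $H_s^1(\Omega)$. Following \cite{Shen12}, we take $\|u\|_{H_s^1}^2=\|\nabla u\|_s^2+\|u\|_{s-2}^2$, since the weighted Hardy-type term $\|u\|_{s-2}$ is exactly what controls the singular coefficient $(1-|\bm{q}|^2)^{-1}$. The argument has three steps: continuity of $a$, continuity of $L$, and coercivity of $a$. The first two are routine; coercivity is where Lemma~\ref{lem:Shen_identity} and the time-step restriction enter.

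\emph{Continuity.} Since $(1-|\bm{q}|^2)^s\le (1-|\bm{q}|^2)^{s-2}$, we have $|(u,v)_s|\le\|u\|_{s-2}\|v\|_{s-2}$, and the gradient term is bounded directly by Cauchy--Schwarz. For the singular term we write
\[
\left|\left(u,\frac{\bm{q}\cdot\nabla v}{1-|\bm{q}|^2}\right)_s\right|\le \int_\Omega |u|(1-|\bm{q}|^2)^{\frac{s}{2}-1}\,|\nabla v|(1-|\bm{q}|^2)^{\frac{s}{2}}\,\mathrm{d}\bm{q}\le \|u\|_{s-2}\|\nabla v\|_s .
\]
Hence $a$ is bounded on $H^1_s(\Omega)$. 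Similarly, $|L(v)|\le \frac{1}{2\Delta t}\|4h^n-h^{n-1}\|_s\|v\|_s+|\bm{K}|\,\|2h^n-h^{n-1}\|_s\|\nabla v\|_s$, because $|\bm{q}|\le1$. So $L\in (H_s^1)'$ whenever $h^n,h^{n-1}\in L^2_s(\Omega)$.

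\emph{Coercivity, case $s=b/2$.} Here the singular term vanishes, so $a(u,u)=\frac{3}{2\Delta t}\|u\|_s^2+\frac{1}{\mathrm{De}}\|\nabla u\|_s^2$. We then need a weighted Hardy inequality $\|u\|_{s-2}\le C(\|\nabla u\|_s+\|u\|_s)$ for $s>1$, which is the standard one from \cite{Kufner85}; this is where $s>1$ is used. Combining it with the expression for $a(u,u)$ gives $a(u,u)\ge c\|u\|_{H_s^1}^2$.

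\emph{Coercivity, case $s<b/2$.} Set $\epsilon=\frac{b-2s}{\mathrm{De}}>0$ and choose $\gamma$ with $\epsilon\gamma<\frac{3}{2\Delta t}$ and $\gamma>\gamma(s)$; the assumed time-step condition makes this possible. Then
\[
a(u,u)=\Big(\tfrac{3}{2\Delta t}-\epsilon\gamma\Big)\|u\|_s^2+\tfrac{1}{\mathrm{De}}\|\nabla u\|_s^2+\epsilon\Big[\gamma\|u\|_s^2+\Big(u,\tfrac{\bm{q}\cdot\nabla u}{1-|\bm{q}|^2}\Big)_s\Big].
\]
By \eqref{eq:positivity}, the bracket is bounded below by $\beta(\gamma)\|u\|_{s-2}^2$ with $\beta(\gamma)>0$. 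Therefore $a(u,u)\ge \min\{\tfrac{1}{\mathrm{De}},\epsilon\beta(\gamma)\}\|u\|_{H^1_s}^2$.

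The main obstacle is making the function-space setting consistent. One must check that $H^1_s$, normed as above, is a Hilbert space, that the weighted Hardy inequality holds there, and that \eqref{eq:identity} extends by density from smooth functions to all of $H^1_s$; the boundary terms vanish because $s>1$. Once these are in place, Lax--Milgram yields a unique $h^{n+1}\in H_s^1(\Omega)$.
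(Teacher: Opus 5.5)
Your proof is correct and follows the paper's argument: Lax--Milgram with the case split $s=b/2$ versus $s<b/2$, the splitting $\frac{3}{2\Delta t}=\lambda+\frac{b-2s}{\mathrm{De}}\gamma$ with $\lambda>0$ and $\gamma>\gamma(s)$, and an appeal to \eqref{eq:positivity}. The only difference is that you use the norm $\|\nabla u\|_s^2+\|u\|_{s-2}^2$, which makes continuity of the singular term immediate and lets you keep $\beta(\gamma)\|u\|_{s-2}^2$ in the coercivity bound, whereas the paper only notes that the bracket is nonnegative and leaves implicit, under ``straightforward,'' the weighted Hardy inequality needed to bound that term in the standard norm $\|u\|_s^2+\|\nabla u\|_s^2$.
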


\begin{proof}
The continuity of the bilinear form $a(u,v)$ is straightforward. For coercivity, we consider $a(u,u)$. 
If $s = b/2$, the third term in $a(u,u)$ vanishes, and coercivity follows immediately from the positivity of the time-derivative and diffusion terms. 
If $s < b/2$, we utilize Lemma \ref{lem:Shen_identity}. By splitting the coefficient of the time derivative term as $\frac{3}{2\Delta t} = \lambda + \frac{b-2s}{\text{De}}\gamma$ with $\lambda > 0$ and $\gamma > \gamma(s)$, we obtain:
\begin{equation}
    a(u, u) = \lambda \|u\|_s^2 + \frac{1}{\text{De}}\|\nabla u\|_s^2 + \frac{b-2s}{\text{De}}\left[ \gamma \|u\|_s^2 + \left(u, \frac{\bm{q}\cdot \nabla u}{1-|\bm{q}|^2}\right)_s \right].
\end{equation}
Using \eqref{eq:positivity}, the term in the brackets is non-negative. Thus, $a(u,u) \ge C \|\nabla u\|_s^2$, which implies coercivity in $H_s^1(\Omega)$. The existence and uniqueness follow from the Lax-Milgram theorem.
\end{proof}

We now establish the energy stability of the proposed BDF2 scheme.

\begin{theorem}[Stability]
\label{thm:stability}
Let $h^0 \in L^2_s(\Omega)$ be the initial data. Assume $1 < s \le b/2$. 
\begin{enumerate}
    \item \textbf{Case I ($s = b/2$):} The scheme \eqref{eq:BDF2} is unconditionally stable.
    \item \textbf{Case II ($s < b/2$):} The scheme is stable provided that the time step satisfies $\Delta t < \frac{3\mathrm{De}}{2(b-2s)\gamma(s)}$.
\end{enumerate}
In both cases, the solution satisfies the following energy estimate for any $N \ge 1$:
\begin{equation}
\begin{aligned}
    \|h^N\|_s^2 &+ \|2h^N - h^{N-1}\|_s^2 + \sum_{n=1}^{N-1} \|h^{n+1} - 2h^n + h^{n-1}\|_s^2 \\
    &+ \frac{2\Delta t}{\text{De}} \sum_{n=1}^{N-1} \|\nabla h^{n+1}\|_s^2 + 2\Delta t \sum_{n=1}^{N-1} \mathcal{P}(h^{n+1}) \le C(T, h^0, h^1),
\end{aligned}
\end{equation}
where $\mathcal{P}(h) = \beta(\gamma)\|h\|_{s-2}^2 \ge 0$ is the dissipation contribution from the FENE potential (with $\beta(\gamma)=0$ when $s=b/2$).
\end{theorem}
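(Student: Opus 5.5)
The plan is the standard BDF2 energy argument: take $g = 2\Delta t\,\cdot 2h^{n+1}$, i.e. $g=4\Delta t\, h^{n+1}$, in \eqref{eq:BDF2}, and use the polarization identity
\begin{equation*}
2(3a-4b+c,a)_s = \|a\|_s^2+\|2a-b\|_s^2-\|b\|_s^2-\|2b-c\|_s^2+\|a-2b+c\|_s^2,
\end{equation*}
with $a=h^{n+1}$, $b=h^n$, $c=h^{n-1}$. Denote $E^{n}=\|h^{n}\|_s^2+\|2h^{n}-h^{n-1}\|_s^2$. The time-difference term then becomes
\begin{equation*}
E^{n+1}-E^n+\|h^{n+1}-2h^n+h^{n-1}\|_s^2 .
\end{equation*}
The diffusion term contributes $\frac{4\Delta t}{\mathrm{De}}\|\nabla h^{n+1}\|_s^2$.

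For the potential term there are two cases. In Case I it vanishes. In Case II, Lemma~\ref{lem:Shen_identity} is applied as in the existence proof: choose $\gamma>\gamma(s)$ such that $\frac{3}{2\Delta t}>\frac{b-2s}{\mathrm{De}}\gamma$, which is possible by the time-step restriction. We then write
\begin{equation*}
\frac{b-2s}{\mathrm{De}}\Big(h^{n+1},\tfrac{\bm q\cdot\nabla h^{n+1}}{1-|\bm q|^2}\Big)_s
=\frac{b-2s}{\mathrm{De}}\Big[\gamma\|h^{n+1}\|_s^2+\Big(h^{n+1},\tfrac{\bm q\cdot\nabla h^{n+1}}{1-|\bm q|^2}\Big)_s\Big]-\frac{b-2s}{\mathrm{De}}\gamma\|h^{n+1}\|_s^2 .
\end{equation*}
The bracket is bounded below by $\beta(\gamma)\|h^{n+1}\|_{s-2}^2$, which produces $\mathcal P(h^{n+1})$ after absorbing the constant $(b-2s)/\mathrm{De}$. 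The leftover negative term $-C\Delta t\|h^{n+1}\|_s^2$ will be handled by Gronwall.

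\textbf{Main obstacle: the explicit convection term.} The right-hand side is $4\Delta t\,(2h^n-h^{n-1},(\bm K\cdot\bm q)\cdot\nabla h^{n+1})_s$. Since $|\bm q|\le1$, Cauchy--Schwarz and Young give the bound
\begin{equation*}
\frac{2\Delta t}{\mathrm{De}}\|\nabla h^{n+1}\|_s^2+2\mathrm{De}\,|\bm K|^2\Delta t\,\|2h^n-h^{n-1}\|_s^2 .
\end{equation*}
This absorbs half the diffusion, leaving the $\frac{2\Delta t}{\mathrm{De}}$ appearing in the statement. The remaining part is bounded by $C\Delta t\,E^n$.

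Summing from $n=1$ to $N-1$ yields
\begin{equation*}
E^N+\sum\|\cdots\|^2+\frac{2\Delta t}{\mathrm{De}}\sum\|\nabla h^{n+1}\|_s^2+2\Delta t\sum\mathcal P\le E^1+C\Delta t\sum_{n=1}^{N}E^n .
\end{equation*}
A discrete Gronwall lemma then requires $C\Delta t<1$ to handle the implicit $E^N$ term on the right. That condition is either ensured by small $\Delta t$, or avoided by the alternative splitting described next.

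\textbf{Alternative splitting in Case II.} Here I would try to avoid any extra smallness condition by using $\|h^{n+1}\|_s^2\le$ something controlled by the positive part. Specifically, the time-step condition is exactly what makes $\lambda=\frac{3}{2\Delta t}-\frac{b-2s}{\mathrm{De}}\gamma>0$. Alternatively, one can simply bound $\|h^{n+1}\|_s^2\le E^{n+1}$ and accept a Gronwall constant $e^{CT}$, giving $C(T,h^0,h^1)$. The key point is that $E^1$ depends only on $h^0$ and $h^1$, so the final constant depends on $T$, $h^0$, $h^1$ and $\bm K$ as claimed.
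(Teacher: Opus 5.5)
Your skeleton matches the paper's. Testing with $4\Delta t\,h^{n+1}$ is its $g=h^{n+1}$ multiplied through, and you use the same BDF2 polarization identity, the same Young bound absorbing half the diffusion, and Lemma~\ref{lem:Shen_identity} in the same way. Everything except the treatment of the leftover $-\frac{b-2s}{\mathrm{De}}\gamma\|h^{n+1}\|_s^2$ is fine.

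\textbf{The gap: the Case II threshold.} Your main route moves $\frac{4\Delta t(b-2s)\gamma}{\mathrm{De}}\|h^{n+1}\|_s^2$ to the right, bounds it by $E^{n+1}$, and applies an implicit discrete Gronwall. That is rigorous, but it needs $\frac{4\Delta t(b-2s)\gamma}{\mathrm{De}}<1$ for some $\gamma>\gamma(s)$. If $C$ is to be independent of $\Delta t$, you also need a fixed margin, e.g.\ $\le\frac12$. This amounts to roughly $\Delta t<\frac{\mathrm{De}}{4(b-2s)\gamma(s)}$, six times smaller than the stated bound.

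Your ``alternative splitting'' does not close this gap. The positive term $\lambda\|u\|_s^2$ with $\lambda=\frac{3}{2\Delta t}-\frac{b-2s}{\mathrm{De}}\gamma$ exists in the coercivity argument for $a(u,u)$, where $\frac{3}{2\Delta t}(u,u)_s$ is kept whole. Once you apply the polarization identity, all of $\frac{3}{2\Delta t}\|h^{n+1}\|_s^2$ has been spent on $E^{n+1}-E^n+\|h^{n+1}-2h^n+h^{n-1}\|_s^2$. Borrowing any of it breaks the telescoping and simply regenerates the implicit Gronwall term. Likewise, ``bound $\|h^{n+1}\|_s^2\le E^{n+1}$ and accept $e^{CT}$'' is the same route, not a way around the smallness condition. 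So your choice $\frac{3}{2\Delta t}>\frac{b-2s}{\mathrm{De}}\gamma$ is never actually used.

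\textbf{Comparison with the paper.} The paper's proof performs exactly the borrowing you gesture at. It splits ``$\frac{3}{4\Delta t}=\frac{1}{4\Delta t}+\frac{b-2s}{\mathrm{De}}\gamma$'' and then writes the combined inequality with $\frac{1}{4\Delta t}(\mathcal{E}^{n+1}-\mathcal{E}^n)$ intact, no negative $\|h^{n+1}\|_s^2$ term, and an explicit Gronwall. That step has the same defect, since no separate mass term survives the identity. Its arithmetic is also off: $\gamma=\frac{\mathrm{De}}{2\Delta t(b-2s)}>\gamma(s)$ holds only for $\Delta t<\frac{\mathrm{De}}{2(b-2s)\gamma(s)}$, a third of the stated threshold.

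Your implicit-Gronwall version is therefore the sound way to complete the argument. It gives the energy estimate under a restriction of the same form $\Delta t\lesssim\mathrm{De}/((b-2s)\gamma(s))$, but with a smaller constant. Reaching the stated constant $\frac32$ would require a genuinely different argument, which neither your sketch nor the paper supplies.

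\textbf{Case I.} State explicitly why Case I is unconditional. The only implicit term on the right comes from the potential leftover. The convection constant $2\mathrm{De}|\bm{K}|^2$ multiplies only $E^1,\dots,E^{N-1}$, so when $s=b/2$ an explicit Gronwall suffices. Your summed inequality with $C\Delta t\sum_{n=1}^{N}E^n$, together with the remark that Gronwall ``requires $C\Delta t<1$'', blurs this distinction.
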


\begin{proof}
We choose the test function $g = h^{n+1}$ in \eqref{eq:BDF2}. 
Using the identity $2(3a-4b+c, a) = |a|^2 + |2a-b|^2 - |b|^2 - |2b-c|^2 + |a-2b+c|^2$, the first term becomes:
\begin{equation}
    \frac{1}{2\Delta t}\left(3h^{n+1}-4h^{n}+h^{n-1}, h^{n+1}\right)_s = \frac{1}{4\Delta t}\left( \mathcal{E}^{n+1} - \mathcal{E}^n + \|h^{n+1}-2h^n+h^{n-1}\|_s^2 \right),
\end{equation}
where $\mathcal{E}^{n+1} = \|h^{n+1}\|_s^2 + \|2h^{n+1}-h^n\|_s^2$.

For the convection term on the right-hand side, we apply the Cauchy-Schwarz inequality, Young's inequality, and the bound $\|\boldsymbol{K}\cdot\boldsymbol{q}\|_{L^\infty}^2 \le |\boldsymbol{K}|_\infty^2$:
\begin{equation}
\begin{aligned}
    \left( (2h^n - h^{n-1}) (\bm{K}\cdot \bm{q}), \nabla h^{n+1} \right)_s 
    &\le |\boldsymbol{K}|_\infty \|2h^n - h^{n-1}\|_s \|\nabla h^{n+1}\|_s \\
    &\le \frac{1}{2\text{De}} \|\nabla h^{n+1}\|_s^2 + \frac{\text{De}|\boldsymbol{K}|_\infty^2}{2} \|2h^n - h^{n-1}\|_s^2.
\end{aligned}
\end{equation}

Now we handle the FENE potential term.
\textbf{Case I ($s=b/2$):} The term involving $\frac{b-2s}{\text{De}}$ vanishes. We simply keep $\frac{3}{4\Delta t}\|h^{n+1}\|_s^2$ on the LHS.
\textbf{Case II ($s<b/2$):} We borrow a portion of the mass term from the time discretization to stabilize the potential term. Specifically, we set $\frac{3}{4\Delta t} = \frac{1}{4\Delta t} + \frac{b-2s}{\text{De}}\gamma$ (implying $\gamma = \frac{\text{De}}{2\Delta t(b-2s)}$). The stability condition $\Delta t < \frac{3\text{De}}{2(b-2s)\gamma(s)}$ ensures that $\gamma > \gamma(s)$. By Lemma \ref{lem:Shen_identity}, we have:
\begin{equation}
    \frac{b-2s}{\text{De}}\gamma \|h^{n+1}\|_s^2 + \frac{b-2s}{\text{De}}\left(h^{n+1}, \frac{\bm{q}\cdot \nabla h^{n+1}}{1-|\bm{q}|^2}\right)_s \ge  \frac{b-2s}{\text{De}}\beta(\gamma)\|h^{n+1}\|_{s-2}^2.
\end{equation}

Combining these estimates, we arrive at:
\begin{equation}
\begin{aligned}
    \frac{1}{4\Delta t} &(\mathcal{E}^{n+1} - \mathcal{E}^n) + \frac{1}{4\Delta t}\|h^{n+1}-2h^n+h^{n-1}\|_s^2 \\
    &+ \frac{1}{2\text{De}}\|\nabla h^{n+1}\|_s^2 +  \frac{b-2s}{\text{De}}\beta(\gamma)\|h^{n+1}\|_{s-2}^2 \\
    &\le \frac{\text{De}|\boldsymbol{K}|_\infty^2}{2} \|2h^n - h^{n-1}\|_s^2.
\end{aligned}
\end{equation}
Multiplying by $4\Delta t$, summing from $n=1$ to $N-1$, and noting that $\|2h^n - h^{n-1}\|_s^2 \le \mathcal{E}^n$, we can apply the discrete Gronwall inequality. Since the term $\beta(\gamma)\|h^{n+1}\|_{s-2}^2$ is non-negative, it is retained on the left-hand side to improve the stability bound. This completes the proof.
\end{proof}

\begin{remark}[Asymptotic behavior and stability at low $\mathrm{De}$ number]
The conditional stability criterion established in Theorem \ref{thm:stability} requires the time step to satisfy $\Delta t \lesssim \frac{\mathrm{De}}{b-2s}$. In the small Deborah number limit ($\mathrm{De} \to 0$), a fixed weight index $s < b/2$ would impose a prohibitively restrictive time-step constraint ($\Delta t \to 0$), leading to severe numerical stiffness. To mitigate this, one must enforce the scaling $b-2s = O(\mathrm{De})$, which implies $s$ should approach $b/2$. 
This choice is not merely a numerical artifact but is physically justified. Asymptotic analyses of the Fokker--Planck equation (see, e.g., \cite{Jourdain04,Zhang06}) indicate that the probability density function $f$ rapidly relaxes to the equilibrium profile near the boundary, behaving as $f \sim (1-|\bm{q}|^2)^{b/2}$. Consequently, setting $s \approx b/2$ ensures that the transformed variable $h = f/(1-|\bm{q}|^2)^s$ remains regular and bounded, thereby preserving both numerical stability and physical fidelity in the small $\mathrm{De}$ regime.
\end{remark}

\section{Spectral discretization in configuration space}

\subsection{Spherical harmonic expansion and symmetry}
We approximate the dependence on the angular coordinates $(\theta, \varphi)$ using a truncated expansion in real spherical harmonics:
\begin{equation}          
    h(p,\theta,\varphi)=\sum_{l=0}^{L}\sum_{v=0,1}\sum_{m=v}^l\psi_{lm}^v(p)Y_{lm}^v(\theta,\varphi), \label{eq:expansion_h}
\end{equation}
where the real spherical harmonics $Y_{lm}^v$ are defined as
\begin{equation}
    Y_{lm}^v(\theta,\varphi)=C_l^m P_l^m(\cos\theta)e_m^v(\varphi),\quad \text{with } C_l^m=\sqrt{\frac{(2l+1)(l-m)!}{2(l+m)!}}.
\end{equation}
Here, $P_l^m$ denotes the associated Legendre polynomials, and the azimuthal basis functions are given by
\begin{equation}
    e_m^v(\varphi)=\begin{cases}
        \frac{1}{\sqrt{2\pi}}, & v=0, m=0, \\
        \frac{1}{\sqrt{\pi}}\cos(m\varphi), & v=0, m>0, \\
        \frac{1}{\sqrt{\pi}}\sin(m\varphi), & v=1, m>0.
    \end{cases}
\end{equation}

The FENE dumbbell model assumes that the two ends of the polymer molecule are indistinguishable, implying that the probability distribution function must satisfy the head-tail symmetry condition:
\begin{equation}
    h(p,\theta,\varphi) = h(p,\pi-\theta,\pi+\varphi).\label{eq:symmetry}
\end{equation}
Recalling the parity property of spherical harmonics, $Y_{lm}^v(\pi-\theta,\pi+\varphi)=(-1)^lY_{lm}^v(\theta,\varphi)$, the difference between the function and its centrally symmetric counterpart is
\begin{equation*}
    h(p,\theta,\varphi)-h(p,\pi-\theta,\pi+\varphi) = \sum_{l,v,m}\psi_{lm}^v(p)\left[1-(-1)^l\right]Y_{lm}^v(\theta,\varphi).
\end{equation*}
For this difference to vanish, the coefficients must satisfy $\psi_{lm}^v(p) = 0$ for all odd $l$. Consequently, the summation in \eqref{eq:expansion_h} is restricted to even degrees $l$.

Regarding the radial direction, regularity at the pole (corresponding to $r=0$ or $p=-1$) imposes constraints on the behavior of $\psi_{lm}^v(p)$. To ensure $C^1$ continuity of the distribution function in $\bm{q}$, the coefficient functions must behave as $\psi_{lm}^v(p)=O((1+p)^{l/2})$ as $p\to -1$. For the mapped coordinate $p$, we consider two different basis choices to address this regularity.

\subsection{Fast Jacobi Galerkin method with $C^1$ continuity (JG1)}
In the first approach, denoted as JG1, we select the basis functions in the $p$-direction to satisfy the essential pole conditions. The radial basis functions are defined as
\begin{equation}
    \phi_{ln}(p)=\begin{cases}
        J_n^{s-2,\frac{1}{2}}(p), & l=0, \\
        (1+p)J_n^{s-2,\frac{3}{2}}(p), & l>0,
    \end{cases}
\end{equation}
where $J_n^{\alpha,\beta}(p)$ represents the Jacobi polynomial of degree $n$ with indices $\alpha, \beta$. The full approximation space is spanned by
\begin{equation}
    h_N(p,\theta,\varphi)=\sum_{l=0, \text{even}}^{L}\sum_{v=0,1}\sum_{m=v}^{l}\sum_{n=0}^{N} b_{vlmn}\phi_{ln}(p)Y_{lm}^v(\theta,\varphi).
\end{equation}
Substituting this expansion into the weak formulation \eqref{eq:weak formulation4} and choosing the test function $g=\phi_{l'n'}(p)Y_{l'm'}^{v'}(\theta,\varphi)$, we obtain the following linear algebraic system:
\begin{equation}
    \mathcal{A} \frac{\mathrm{d}\mathbf{b}}{\mathrm{d}t} + \frac{1}{\mathrm{De}}\mathcal{B}\mathbf{b} + \frac{b-2s}{\mathrm{De}}\mathcal{C}\mathbf{b} = k_{ij}\mathcal{D}_{ij}\mathbf{b}.
\end{equation}
Element-wise, the equation reads
\begin{equation}
    A_{vlmn}^{v'l'm'n'}\dot{b}_{vlmn} + \frac{1}{\mathrm{De}}B_{vlmn}^{v'l'm'n'}b_{vlmn} + \frac{b-2s}{\mathrm{De}}C_{vlmn}^{v'l'm'n'}b_{vlmn} = k_{ij}(D_{ij})_{vlmn}^{v'l'm'n'}b_{vlmn},
\end{equation}
where the mass, stiffness, and potential matrices are decoupled due to the $L^2$-orthogonality of spherical harmonics:
\begin{align*}
    A_{vlmn}^{v'l'm'n'} &= \delta_{vv'}\delta_{ll'}\delta_{mm'} (O_l)_n^{n'}, \\
    B_{vlmn}^{v'l'm'n'} &= \delta_{vv'}\delta_{ll'}\delta_{mm'} \left(8(P_l)_n^{n'} + 2l(l+1)(Q_l)_n^{n'}\right), \\
    C_{vlmn}^{v'l'm'n'} &= \delta_{vv'}\delta_{ll'}\delta_{mm'} \left(4(R_l)_n^{n'}\right), \\
    (D_{ij})_{vlmn}^{v'l'm'n'} &= (U_{ij})_{vlm}^{v'l'm'} 2(S_{ll'})_n^{n'} + (V_{ij}+W_{ij})_{vlm}^{v'l'm'} (O_{ll'})_n^{n'}.
\end{align*}
The radial matrices are defined by the inner products in the mapped domain:
\begin{align*}
    (O_l)_n^{n'} &= (\phi_{ln},\phi_{ln'})_{s,\frac{1}{2}}, & (P_l)_n^{n'} &= ((1+p)\phi_{ln}',\phi_{ln'}')_{s,\frac{1}{2}}, \\
    (Q_l)_n^{n'} &= \left(\frac{1}{1+p}\phi_{ln},\phi_{ln'}\right)_{s,\frac{1}{2}}, & (R_l)_n^{n'} &= \left(\frac{1+p}{1-p}\phi_{ln},\phi_{ln'}'\right)_{s,\frac{1}{2}}, \\
    (S_{ll'})_n^{n'} &= ((1+p)\phi_{ln},\phi_{l'n'}')_{s,\frac{1}{2}}, & (O_{ll'})_n^{n'} &= (\phi_{ln},\phi_{l'n'})_{s,\frac{1}{2}}.
\end{align*}
The angular interaction matrices are given by:
\begin{align*}
    (U_{ij})_{vlm}^{v'l'm'} &= (Y_{lm}^v, Y_{l'm'}^{v'} \hat{r}_i \hat{r}_j)_{\sin\theta}, \\
    (V_{ij})_{vlm}^{v'l'm'} &= (Y_{lm}^v, \partial_{\theta}Y_{l'm'}^{v'} \hat{\theta}_i \hat{r}_j)_{\sin\theta}, \\
    (W_{ij})_{vlm}^{v'l'm'} &= \left(Y_{lm}^v, \partial_{\varphi}Y_{l'm'}^{v'} \frac{\hat{\varphi}_i \hat{r}_j}{\sin\theta}\right)_{\sin\theta},
\end{align*}
where $(\cdot,\cdot)_{\sin\theta}$ denotes the standard $L^2$ inner product on the unit sphere.

A key feature of this method is the sparsity of the radial matrices. Specifically, $(O_l), (P_l), (Q_l)$, and $(R_l)$ are banded matrices with bandwidths of 7, 5, 5, and 5, respectively (for $l=0$, the contribution from $Q_l$ vanishes). The detailed structures of these matrices are provided in the appendix of \cite{Shen12}. While the convection matrix $(D_{ij})$ is dense in the radial direction, its angular blocks are sparse, which ensures computational efficiency (see Appendix A for details).

\subsection{Fast Jacobi Galerkin method with $C^{\infty}$ continuity (JGinf)}
To achieve higher spectral accuracy, particularly near the pole, we propose the JGinf method which enforces a stronger regularity condition consistent with the analytic behavior of the solution. The radial basis functions are chosen as:
\begin{equation}
    \psi_{ln}(p)=\begin{cases}
        J_n^{s-2,\frac{1}{2}}(p), & l=0, \\
        (1+p)^l J_n^{s-2,\frac{1}{2}+2l-1}(p), & l>0.
    \end{cases}
\end{equation}
The approximation space is defined similarly to JG1, but with an $l$-dependent polynomial degree constraint to avoid redundancy and ensure a uniform total degree:
\begin{equation}
    h_N(p,\theta,\varphi)=\sum_{l=0, \text{even}}^{L}\sum_{v=0,1}\sum_{m=v}^{l}\sum_{n=0}^{N-l} b_{vlmn}\psi_{ln}(p)Y_{lm}^v(\theta,\varphi).
\end{equation}
The resulting linear system shares the same structure as the JG1 system. Importantly, the radial matrices $(O_l, P_l, \dots)$ computed using the new basis $\psi_{ln}$ retain their sparsity properties, possessing bandwidths of 7, 5, 5, and 5, respectively. This choice of basis ensures that the numerical solution correctly captures the asymptotic behavior of the distribution function near the coordinate singularity $r=0$.

\section{Moment closure approximation models}

Directly solving the high-dimensional Fokker--Planck equation is often computationally prohibitive for practical flow simulations. To mitigate this curse of dimensionality while retaining essential microscopic information, a common strategy is to derive evolution equations for the lower-order moments of the configuration distribution function. The most prevalent approach focuses on the second-order moment, known as the conformation tensor.

\subsection{Derivation of the moment equation}
We define the conformation tensor $\bm{C}$ as the second-order moment of the distribution function $f(\bm{q}, t)$:
\begin{equation}
    \bm{C}(t) = \langle \bm{q}\bm{q} \rangle = \int_{\Omega} \bm{q}\bm{q} f(\bm{q}, t) \,\mathrm{d}\bm{q}.
\end{equation}
To derive the evolution equation for $\bm{C}$, we multiply Equation \eqref{eq:FP equation} by the dyadic product $\bm{q}\bm{q}$ and integrate over the configuration space $\Omega$:
\begin{equation}
    \frac{\mathrm{d}\bm{C}}{\mathrm{d}t} = \int_{\Omega}\bm{q}\bm{q}\left[-\nabla_{\bm{q}}\cdot(\bm{K}\cdot \bm{q}f)+\frac{1}{\mathrm{De}}\Delta_{\bm{q}}f+\frac{b}{\mathrm{De}}\nabla_{\bm{q}}\cdot\left(f\frac{\bm{q}}{1-|\bm{q}|^2}\right)\right]\,\mathrm{d}\bm{q}.\label{eq:C1}
\end{equation}
By applying integration by parts and standard tensor identities, the terms on the right-hand side are evaluated as follows:
\begin{align*}
    \text{Convection:} \quad & \int_{\Omega} \bm{q}\bm{q} \left[ -\nabla_{\bm{q}}\cdot(\bm{K}\cdot \bm{q}f) \right]\,\mathrm{d}\bm{q} = \bm{K} \cdot \bm{C} + \bm{C} \cdot \bm{K}^\top, \\
    \text{Diffusion:} \quad & \frac{1}{\mathrm{De}} \int_{\Omega} \bm{q}\bm{q} (\Delta_{\bm{q}}f)\,\mathrm{d}\bm{q} = \frac{1}{\mathrm{De}} \int_{\Omega} f (\Delta_{\bm{q}}(\bm{q}\bm{q}))\,\mathrm{d}\bm{q} = \frac{2}{\mathrm{De}}\bm{I}, \\
    \text{Spring force:} \quad & \frac{b}{\mathrm{De}} \int_{\Omega} \bm{q}\bm{q} \left[ \nabla_{\bm{q}}\cdot\left(f\frac{\bm{q}}{1-|\bm{q}|^2}\right) \right]\,\mathrm{d}\bm{q} = -\frac{2b}{\mathrm{De}} \left\langle \frac{\bm{q}\bm{q}}{1-|\bm{q}|^2} \right\rangle.
\end{align*}
Combining these results yields the exact, but unclosed, evolution equation for $\bm{C}$:
\begin{equation}
    \frac{\mathrm{d}\bm{C}}{\mathrm{d}t}=\bm{K} \cdot \bm{C} + \bm{C} \cdot \bm{K}^\top + \frac{2}{\mathrm{De}}\bm{I} - \frac{2b}{\mathrm{De}} \left\langle \frac{\bm{q}\bm{q}}{1-|\bm{q}|^2} \right\rangle.\label{eq:C_unclosed}
\end{equation}
The system is unclosed because the nonlinear spring term involves higher-order moments (e.g., via the Taylor expansion $\left\langle \frac{\bm{q}\bm{q}}{1-|\bm{q}|^2} \right\rangle = \sum_{k=0}^{\infty} \langle \bm{q}\bm{q} |\bm{q}|^{2k} \rangle$). To create a self-contained system, this term must be approximated as a function of $\bm{C}$, a process known as closure approximation.

\subsection{The FENE-P model}
The FENE-P model employs the widely used Peterlin approximation \cite{Bird87}, a pre-averaging technique that replaces the denominator in the spring term with its ensemble average:
\begin{equation}
    \frac{1}{1-|\bm{q}|^2} \approx \frac{1}{1-\langle |\bm{q}|^2 \rangle} = \frac{1}{1-\mathrm{tr}(\bm{C})}.
\end{equation}
Substituting this into \eqref{eq:C_unclosed} yields the closed evolution equation for the FENE-P model:
\begin{equation}
    \frac{\mathrm{d}\bm{C}}{\mathrm{d}t}=\bm{K} \cdot \bm{C} + \bm{C} \cdot \bm{K}^\top + \frac{2}{\mathrm{De}}\bm{I} - \frac{b}{\mathrm{De}} \frac{\bm{C}}{1-\mathrm{tr}(\bm{C})}.\label{eq:fenep}
\end{equation}
Under this approximation, it can be shown that if the initial distribution is Gaussian, the PDF remains Gaussian for all time \cite{Ottinger87}, taking the form $f_{\mathrm{P}}(\bm{q}) \propto \exp(-\frac{1}{2}\bm{C}^{-1}:\bm{q}\bm{q})$. The FENE-P model is computationally inexpensive and has been proven to be thermodynamically consistent with a modified free energy functional \cite{Hu07}.

Although the Peterlin approximation cannot represent strongly non-Gaussian configuration statistics, the FENE-P closure is theoretically well-motivated in several important regimes. In much of the rheology literature, the flow strength is expressed in terms of the Weissenberg number $\mathrm{Wi}\sim \mathrm{De}\,|\bm{K}|_{\infty}$, so that small (resp. large) $\mathrm{De}\,|\bm{K}|_{\infty}$ corresponds to weak (resp. strong) flow-induced polymer deformation.

From the modeling viewpoint, FENE-P recovers the Hookean-dumbbell/Oldroyd-B limit in the large-$b$ regime, in which the finite-extensibility effects become weak, and the spring behaves effectively linear on the typical configuration scale \cite{Bird87}. Moreover, when the polymer remains far from the extensibility boundary so that $\mathrm{tr}(\bm{C})\ll 1$, the Peterlin factor admits the elementary expansion
\begin{equation}
    \frac{1}{1-\mathrm{tr}(\bm{C})}
    = 1 + \mathrm{tr}(\bm{C}) + O\!\left(\mathrm{tr}(\bm{C})^2\right),
\end{equation}
which shows that the FENE-P relaxation term can be interpreted as a leading Hookean-like contribution plus higher-order corrections associated with finite extensibility. These considerations help rationalize why FENE-P often provides reliable macroscopic stresses in shear-dominated flows over broad parameter ranges, and why it remains a common workhorse in industrial simulations \cite{Yamani23}. Related entropy/energy estimates for Oldroyd-B and associated macro-macro models also support the numerical robustness of closed conformation-tensor
descriptions and guide stable discretizations \cite{Hu07}.
However, the FENE-p approximation fail or produce nonphysical results in some extreme cases, e.g. strong elongation flow.
Therefore, a more physical closure approach, the quasi-equilibrium approximation (QE~\cite{GorbaKZ2004ConstructiveMethods}), is advocated. QE approximation has been adopted and efficiently implemented for both soft-polymer~\cite{Ilg02,Wang08} and rod-like polymers~\cite{chaubal_closure_1998,YuJZ2010,JiangY2021,WeadySS2022ThermoCt,WeadySS2022Fast}. 

\subsection{FENE-QE model and PLA algorithm}
The FENE-QE model \cite{Ilg02} offers a more accurate closure by assuming that, at any instant, the distribution function $f(\bm{q},t)$ relaxes to a state that minimizes the free energy subject to the constraint that its second moment is the current conformation tensor $\bm{C}(t)$. This leads to a distribution of the form
\begin{equation}
    \label{eq:f_qe}
    f_{\mathrm{QE}}(\bm{q}) = \frac{1}{Z(\bm{\lambda})} (1 - |\bm{q}|^2)^{b/2} \exp\left(\bm{\lambda}: \bm{q}\bm{q} \right),
\end{equation}
where $\bm{\lambda}(t)$ is a symmetric tensor of Lagrange multipliers enforcing the constraint $\langle \bm{q}\bm{q} \rangle_{\mathrm{QE}} = \bm{C}(t)$. (Note: $\bm{\lambda}$ is not necessarily traceless, but symmetric).
The unclosed evolution equation \eqref{eq:C_unclosed} can be written in terms of the potential $U$ as
\begin{equation}
    \frac{\mathrm{d}\bm{C}}{\mathrm{d}t} = \bm{K}\cdot\bm{C} + \bm{C}\cdot\bm{K}^\top + \frac{2}{\mathrm{De}}\bm{I} - \frac{2}{\mathrm{De}} \langle \bm{q} (\nabla_{\bm{q}} U)^\top \rangle_{\mathrm{QE}}.\label{eq:C_with_U}
\end{equation}
A key identity, derived from $\int_{\Omega} \nabla_{\bm{q}} \cdot (\bm{q} f_{\mathrm{QE}}) \,\mathrm{d}\bm{q} = 0$, provides the closure relation:
\begin{equation}
    \langle \bm{q} (\nabla_{\bm{q}} U)^\top \rangle_{\mathrm{QE}} = \bm{I} + 2 \bm{C} \cdot \bm{\lambda}.
\end{equation}
Substituting this into \eqref{eq:C_with_U} yields the closed FENE-QE evolution equation:
\begin{equation}
    \frac{\mathrm{d}\bm{C}}{\mathrm{d}t} = \bm{K}\cdot\bm{C} + \bm{C}\cdot\bm{K}^\top - \frac{4}{\mathrm{De}} \bm{C} \cdot \bm{\lambda}.\label{eq:feneqe}
\end{equation}
The corresponding polymer stress tensor is given by $\bm{\tau}_p = \langle \bm{q} (\nabla_{\bm{q}} U)^\top \rangle_{\mathrm{QE}} - \bm{I} = 2\bm{C} \cdot \bm{\lambda}$.

The challenge lies in determining $\bm{\lambda}$ from $\bm{C}$. The mapping $\bm{C} \mapsto \bm{\lambda}$ is implicit and lacks an analytical inverse. The Piecewise Linear Approximation (PLA) method \cite{Wang08} addresses this by pre-computing and interpolating this mapping. Since $\bm{C}$ and $\bm{\lambda}$ are coaxial (share the same eigenvectors), the problem reduces to a 3D mapping between their eigenvalues, $\{c_i\} \leftrightarrow \{\lambda_i\}$.
The PLA algorithm at each time step $t_n$ proceeds as follows:
\begin{enumerate}
    \item \textbf{Diagonalize}: Compute eigenvalues $c_1, c_2, c_3$ and the eigenvector matrix $\bm{Q}$ of $\bm{C}^n$.
    \item \textbf{Interpolate}: Use a pre-computed lookup table $\mathcal{T}$ and multivariate linear interpolation to find the eigenvalues $\lambda_1, \lambda_2, \lambda_3$ corresponding to $c_1, c_2, c_3$.
    \item \textbf{Reconstruct}: Assemble the Lagrange multiplier tensor $\bm{\lambda}^n = \bm{Q} \cdot \mathrm{diag}(\lambda_1, \lambda_2, \lambda_3) \cdot \bm{Q}^\top$.
    \item \textbf{Advance}: Use an ODE solver to advance \eqref{eq:feneqe} to the next time step using $\bm{C}^n$ and $\bm{\lambda}^n$.
\end{enumerate}

\subsection{Neural network-accelerated FENE-QE model (FENE-QE-NN)}
While PLA is effective, it is constrained by large storage requirements and potential interpolation errors, particularly in complex flow regimes. To overcome these limitations, we propose the FENE-QE-NN model, which replaces the lookup table and interpolation with a compact, continuously differentiable neural network. The objective is to construct a high-fidelity mapping $\mathcal{N}_{\theta}: \mathbb{R}^3 \to \mathbb{R}^3$ from the eigenvalues of the conformation tensor, $\bm{c}=(c_1, c_2, c_3)$, to the eigenvalues of the Lagrange multiplier tensor, $\bm{\lambda}=(\lambda_1, \lambda_2, \lambda_3)$.

\paragraph{Dataset and Permutation Invariance}
High-quality training data is crucial. We generate a dataset $\mathcal{D} = \{(\bm{c}^{(i)}, \bm{\lambda}^{(i)})\}_{i=1}^N$ by solving the forward problem (mapping $\bm{\lambda}$ to $\bm{c}$) via high-precision numerical quadrature. A key physical constraint is the permutation invariance of the eigenvalues. To simplify the learning task and reduce redundancy, we enforce a canonical ordering, $c_1 \ge c_2 \ge c_3$, for all training inputs. This forces the network to learn the mapping on a fundamental sub-domain, thereby improving both training efficiency and prediction accuracy. The sampling space is restricted to the physically admissible region $0 < \mathrm{tr}(\bm{C}) < 1$.

\paragraph{Network Architecture}
We employ a standard Multi-Layer Perceptron (MLP) consisting of an input layer (3 neurons), two hidden layers (64 neurons each), and an output layer (3 neurons). Critically, we select the hyperbolic tangent (\texttt{tanh}) as the activation function. Unlike piecewise linear functions such as ReLU, the $C^{\infty}$ smoothness of \texttt{tanh} ensures that the resulting constitutive law is smooth. This property is vital for the stability of numerical simulations, especially when stress derivatives are involved.

\paragraph{Hybrid Optimization Strategy}
To achieve the high accuracy required for scientific computing (i.e., near machine precision), we adopt a two-stage hybrid optimization strategy that combines the strengths of first- and second-order methods:
\begin{enumerate}
    \item \textbf{Phase 1: Global Search (Adam)}. The network is pre-trained using the Adam optimizer with a learning rate of $10^{-3}$ for approximately 1000 epochs. Adam's robustness effectively guides the parameters toward the basin of a global minimum.
    \item \textbf{Phase 2: Local Refinement (L-BFGS)}. After Adam converges, we switch to the L-BFGS optimizer. This quasi-Newton method leverages curvature information to achieve super-linear convergence, refining the solution to a tolerance of $10^{-9}$.
\end{enumerate}
Additionally, both input and output data are standardized using Z-score normalization to improve training stability and convergence speed.

\paragraph{Online Implementation}
Once trained, the lightweight network is embedded into the time-stepping solver. The procedure at each time step mirrors the PLA algorithm but replaces the interpolation step with a network forward pass:
\begin{enumerate}
    \item Decompose $\bm{C}^n$ into its eigenvalues $\bm{c}^n$ and eigenvector matrix $\bm{Q}$.
    \item Sort and normalize $\bm{c}^n$, then feed it into the trained network $\mathcal{N}_{\theta}$ to predict the normalized $\hat{\bm{\lambda}}$.
    \item Denormalize and reconstruct the tensor $\bm{\lambda}^n = \bm{Q} \cdot \mathrm{diag}(\hat{\bm{\lambda}}) \cdot \bm{Q}^\top$.
    \item Advance the ODE \eqref{eq:feneqe} using the computed $\bm{\lambda}^n$.
\end{enumerate}

\section{Numerical results and discussion}

\subsection{Validation of the spectral method}
We first validate the accuracy of the proposed spectral method using the Method of Manufactured Solutions (MMS). The weighted $L_{\omega}^2$ error norm is defined as
\begin{equation}
    \| f \|_{L_{\omega}^2} = \left( \int_{\Sigma} \left(\frac{1-p}{2}\right)^s h^2(p,\theta,\varphi) \frac{\sqrt{1+p}}{4}\sin\theta \,\mathrm{d}p\mathrm{d}\theta\mathrm{d}\varphi \right)^{1/2},
\end{equation}
and the relative error is computed as $E_{\omega} = \| f_{\text{num}} - f_{\text{exact}} \|_{L_{\omega}^2} / \| f_{\text{exact}} \|_{L_{\omega}^2}$.
To facilitate a rigorous error analysis, we construct an exact analytical solution of the form $f_{\text{exact}}(\bm{q},t) = f_{eq}^{\boldsymbol{D}}(\bm{q})\exp(-1/t)$. This requires introducing a corresponding source term $S(\boldsymbol{q},t)$ to the right-hand side of the Fokker--Planck equation:
\begin{equation}
    S(\boldsymbol{q},t) = \frac{1}{t^2}f_{eq}^{\boldsymbol{D}}(\bm{q})\exp\left(-\frac{1}{t}\right).
\end{equation}
The physical parameters are set to $b=12.0$, $s=6.0$, $\mathrm{De}=24.0$, and the flow field is $\boldsymbol{K}=\text{diag}(1,-1,0)$. The convergence results are summarized in Table \ref{tab:spectral_error}.

\begin{table}[htbp]
    \centering
    \caption{Convergence analysis of the Jacobi spectral method. Comparisons are made between the $C^1$-continuous basis (JG1) and the $C^{\infty}$-continuous basis (JGinf).}
    \label{tab:spectral_error}
    \begin{tabular}{ccccc}
        \toprule
        \multirow{2}{*}{\textbf{Resolution}} & \multicolumn{2}{c}{\textbf{JG1 ($C^1$)}} & \multicolumn{2}{c}{\textbf{JGinf ($C^\infty$)}} \\
        \cmidrule(lr){2-3} \cmidrule(lr){4-5}
         & \textbf{DoF} & \textbf{Error} & \textbf{DoF} & \textbf{Error}\\
        \midrule
        $M=N=10$ & 726    & $8.76 \times 10^{-2}$ & 256    & $9.26 \times 10^{-2}$\\
        $M=N=20$ & 4,851  & $1.10 \times 10^{-3}$ & 1,661  & $1.37 \times 10^{-3}$\\
        $M=N=30$ & 14,383 & $3.40 \times 10^{-6}$ & 5,216  & $5.11 \times 10^{-6}$\\
        $M=N=40$ & 33,281 & $3.41 \times 10^{-9}$ & 11,921 & $6.30 \times 10^{-9}$\\
        \bottomrule
    \end{tabular}
\end{table}

The semi-log convergence curves are plotted in Figure \ref{fig:spectral_error}, demonstrating the expected spectral accuracy characteristic of spectral methods.
\begin{figure}[htbp]
    \centering
    \includegraphics[width=0.7\textwidth]{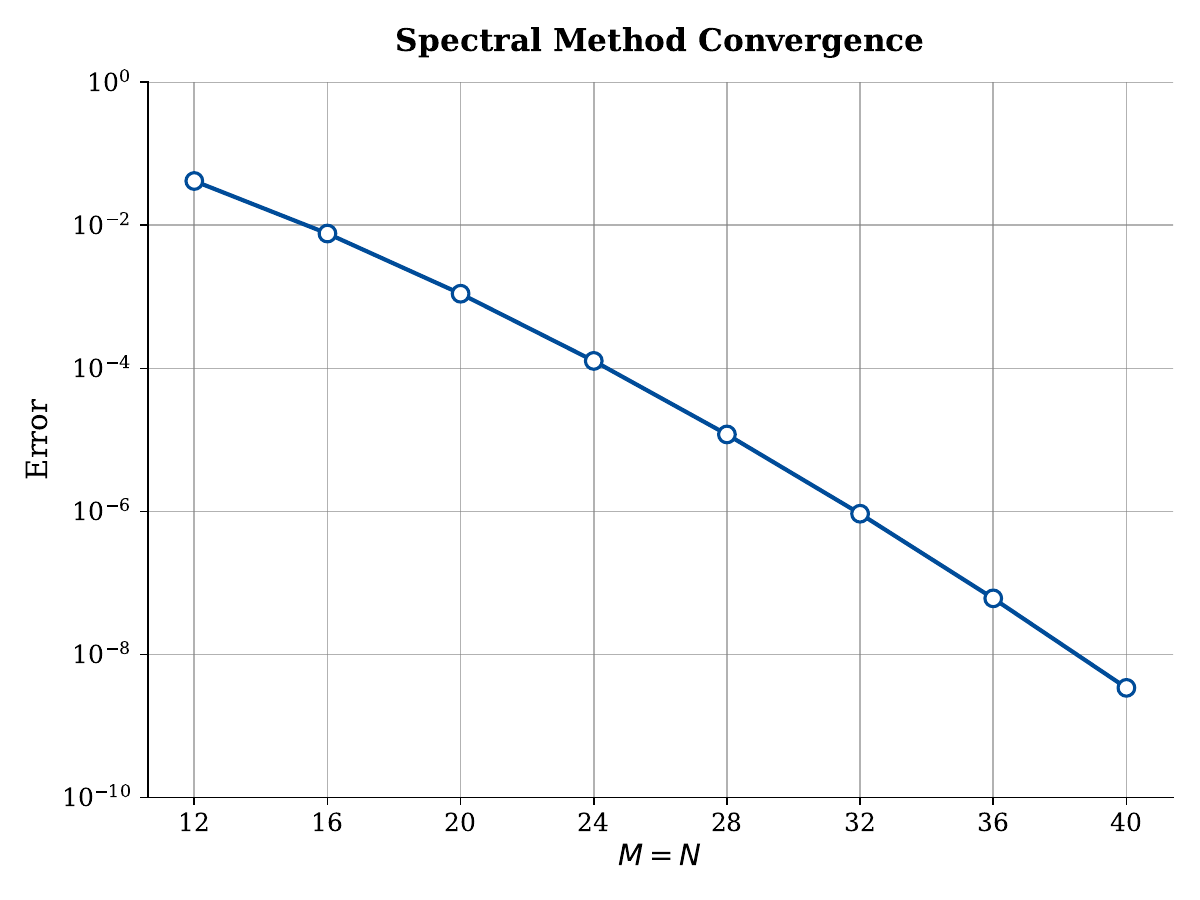}
    \caption{Spectral convergence of the relative $L_{\omega}^2$ error with respect to the polynomial degree $N$.}
    \label{fig:spectral_error}
\end{figure}

\subsection{Training performance of FENE-QE-NN}
The neural network surrogate model is trained using a dataset generated by high-precision numerical quadrature. The network architecture consists of two hidden layers with 64 neurons each. The optimization protocol involves 1000 epochs using the Adam optimizer (global search), followed by L-BFGS (local refinement) with a termination tolerance of $10^{-8}$. The training data is generated with $b=12.0$ and $\mathrm{De}=1.0$.

\begin{figure}[htbp]
    \centering
    \includegraphics[width=1.0\textwidth]{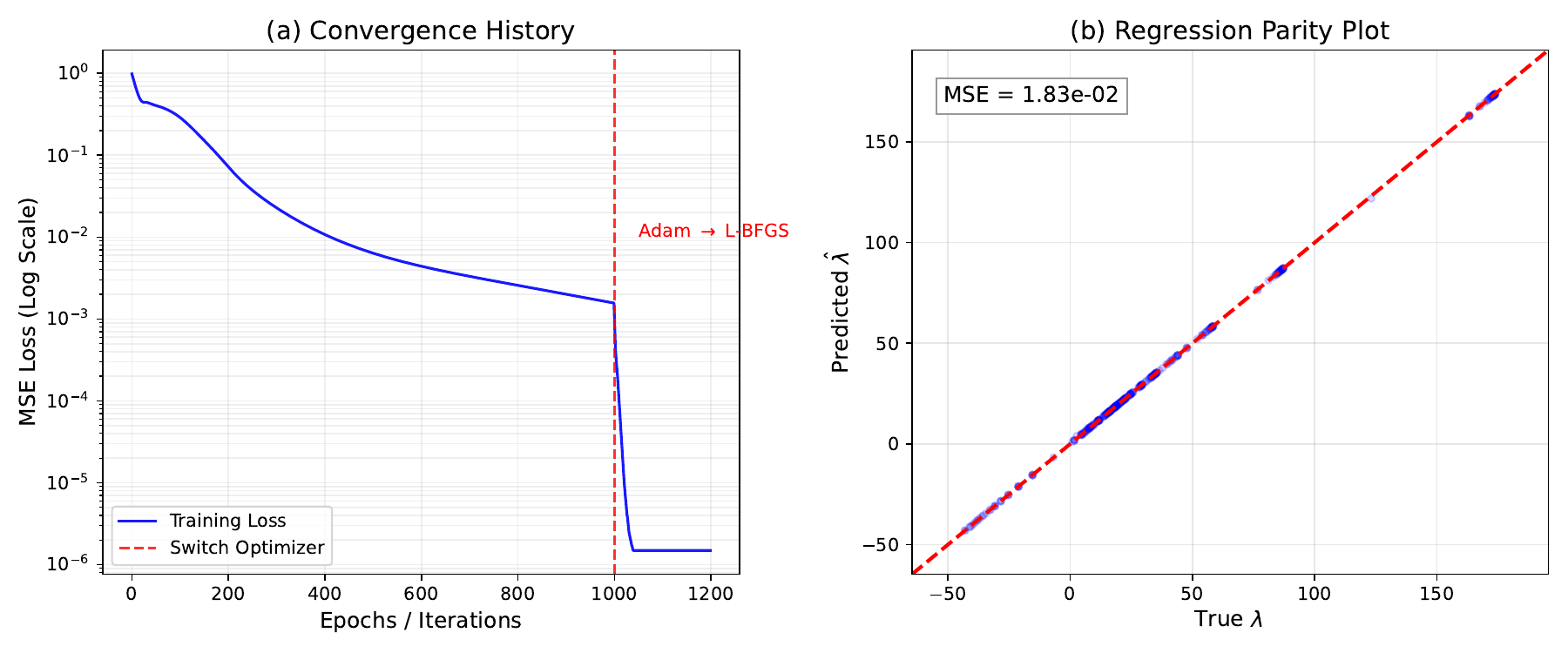}
    \caption{Training diagnostics of the FENE-QE-NN constitutive model. (a) Loss history (MSE) showing the transition from Adam to L-BFGS. (b) Parity plot of the predicted vs. true Lagrange multipliers $\lambda$ on the validation set.}
    \label{fig:nn_train}
\end{figure}

As illustrated in Figure \ref{fig:nn_train}(a), the two-stage optimization strategy proves highly effective. The Adam optimizer rapidly reduces the Mean Squared Error (MSE) to $\mathcal{O}(10^{-3})$, locating the optimal basin of attraction. Subsequently, the L-BFGS optimizer leverages curvature information to achieve super-linear convergence, driving the loss down to $\mathcal{O}(10^{-6})$. Figure \ref{fig:nn_train}(b) assesses the generalization capability. The predictions for the Lagrange multiplier $\lambda$ align perfectly with the ground truth along the $y=x$ line. Notably, even in the high-nonlinearity regime where large $\lambda$ corresponds to the polymer approaching its maximum extensibility ($\text{tr}(\bm{C}) \to 1$), the network maintains high fidelity. The validation MSE is $1.83 \times 10^{-2}$, ensuring sufficient accuracy for the subsequent derivative calculations required for the stress tensor.

\subsection{Benchmark 1: Extensional flow}
We first consider a planar extensional flow defined by the velocity field $\bm{u}=(\kappa x,-\kappa y, 0)$. We employ the high-fidelity spectral solution ($M=N=40$) as the reference to benchmark the accuracy and efficiency of different methods.

\begin{table}[htbp]
    \centering
    \caption{Comparison of computational cost and accuracy for different solvers.}
    \label{tab:time}
    \begin{tabular}{lcc}
        \toprule
        \textbf{Method} & \textbf{CPU Time (s)} & \textbf{$L^2$ Error}\\
        \midrule
        \textbf{Reference ($M=N=40$)} & 568.15 & -- \\
        \textbf{Spectral ($M=N=20$)} & 63.07 & $3.32 \times 10^{-7}$ \\
        \textbf{FENE-P} & 0.002 & $6.77 \times 10^{-2}$ \\
        \textbf{FENE-QE-PLA} & 0.090 & $1.84 \times 10^{-4}$ \\
        \textbf{FENE-QE-NN} & 0.087 & $5.14 \times 10^{-4}$ \\
        \bottomrule
    \end{tabular}
\end{table}

Table \ref{tab:time} highlights the significant speed-up achieved by the closure models. The FENE-QE-PLA and FENE-QE-NN methods are approximately 6500 times faster than the direct high-fidelity spectral solver while maintaining an error level of $\mathcal{O}(10^{-4})$.

Figures \ref{fig:extensional_contours} and \ref{fig:extensional_slices} compare the reconstructed CDFs. At low flow strengths ($\mathrm{De}=1, \kappa=1$), all models agree well. However, as the flow strength increases (e.g., $\mathrm{De}=1, \kappa=20$), the exact CDF develops a distinct bimodal structure. The 1D slices clearly reveal that the FENE-P model, due to its pre-averaging approximation, produces a unimodal Gaussian-like distribution and completely fails to capture the double peaks. In contrast, both FENE-QE-PLA and FENE-QE-NN successfully reproduce the bimodal features, validating the effectiveness of the free-energy minimization assumption in extensional flows.

\begin{figure}[htbp]
    \centering
    \begin{subfigure}[b]{0.63\textwidth}
        \includegraphics[width=\textwidth]{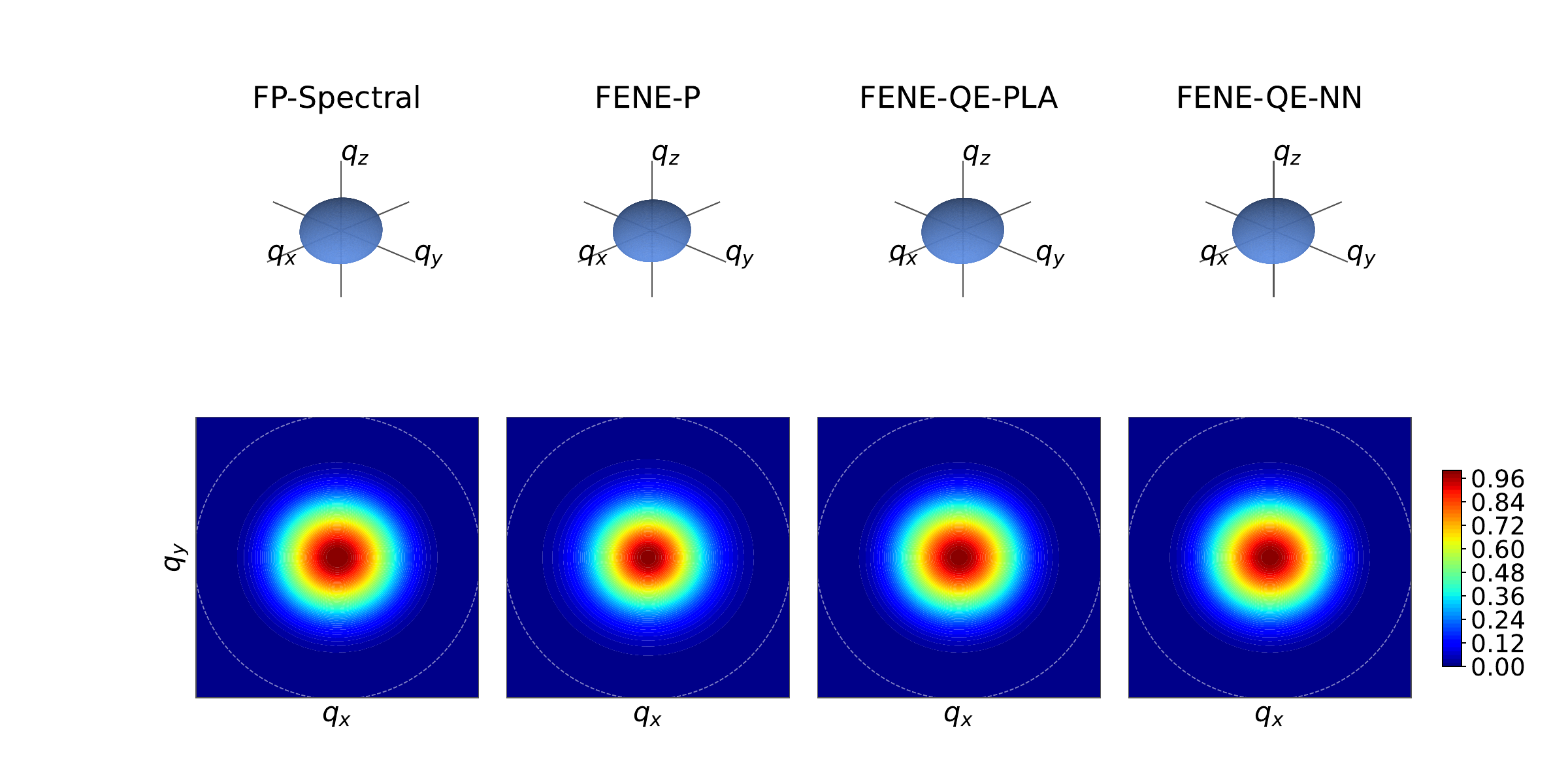}
        \caption{$\mathrm{De}=1.0, \kappa=1.0$}
        \label{fig:k=1}
    \end{subfigure}
    \hfill
    \begin{subfigure}[b]{0.63\textwidth}
        \includegraphics[width=\textwidth]{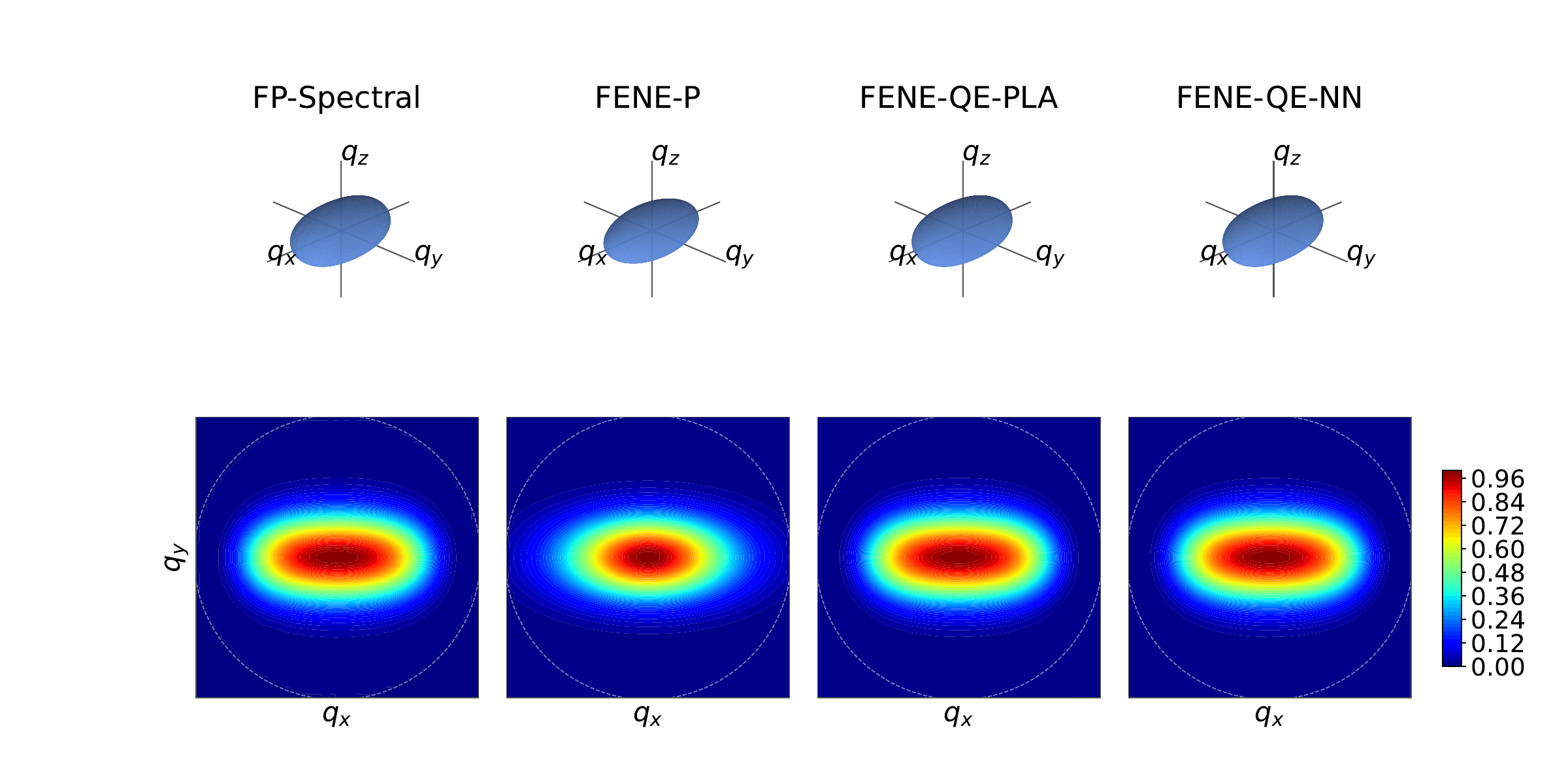}
        \caption{$\mathrm{De}=1.0, \kappa=10.0$}
        \label{fig:k=10}
    \end{subfigure}
    \\
    \begin{subfigure}[b]{0.63\textwidth}
        \includegraphics[width=\textwidth]{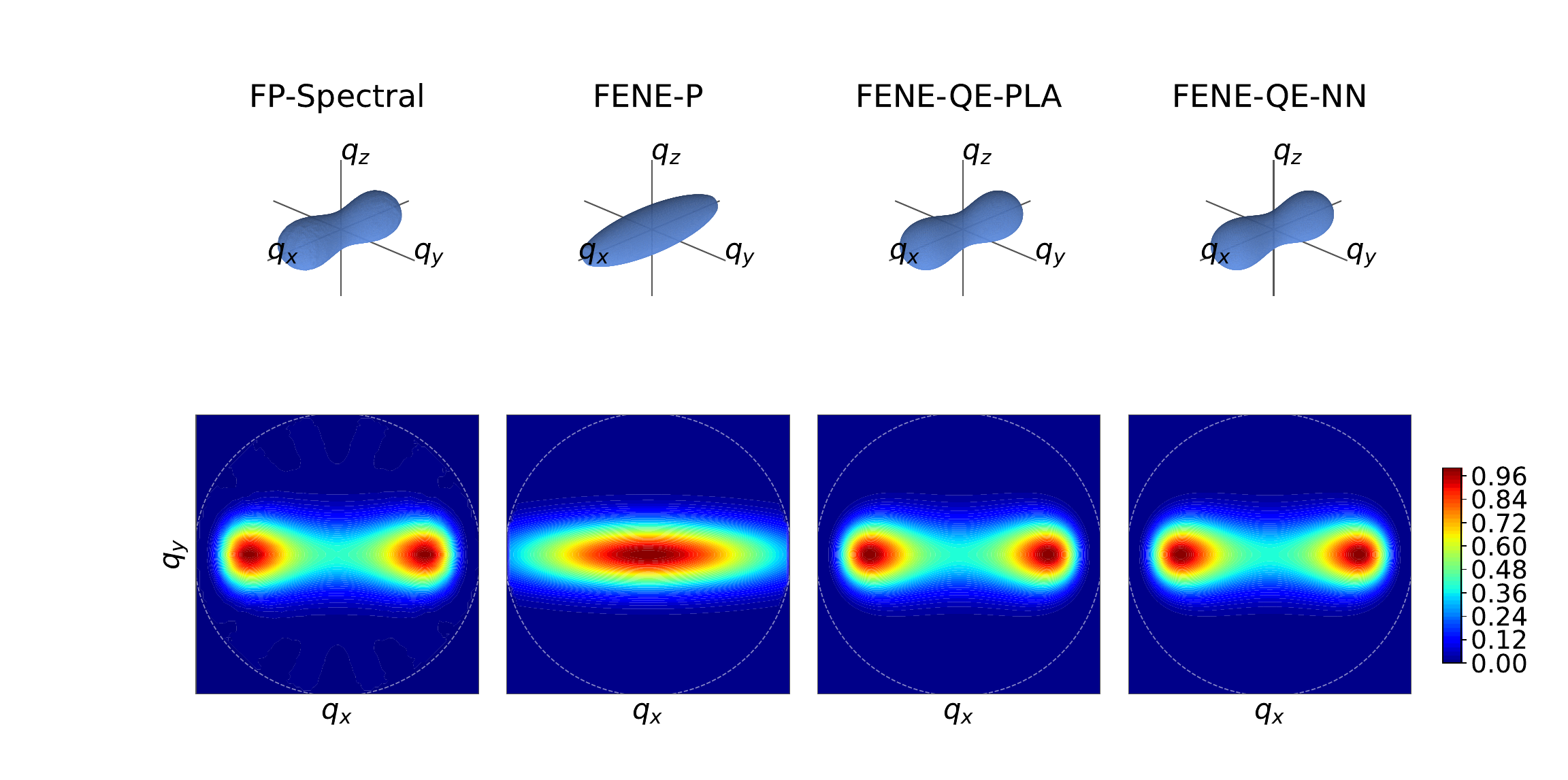}
        \caption{$\mathrm{De}=1.0, \kappa=20.0$}
        \label{fig:k=20}
    \end{subfigure}
    \hfill
    \begin{subfigure}[b]{0.63\textwidth}
        \includegraphics[width=\textwidth]{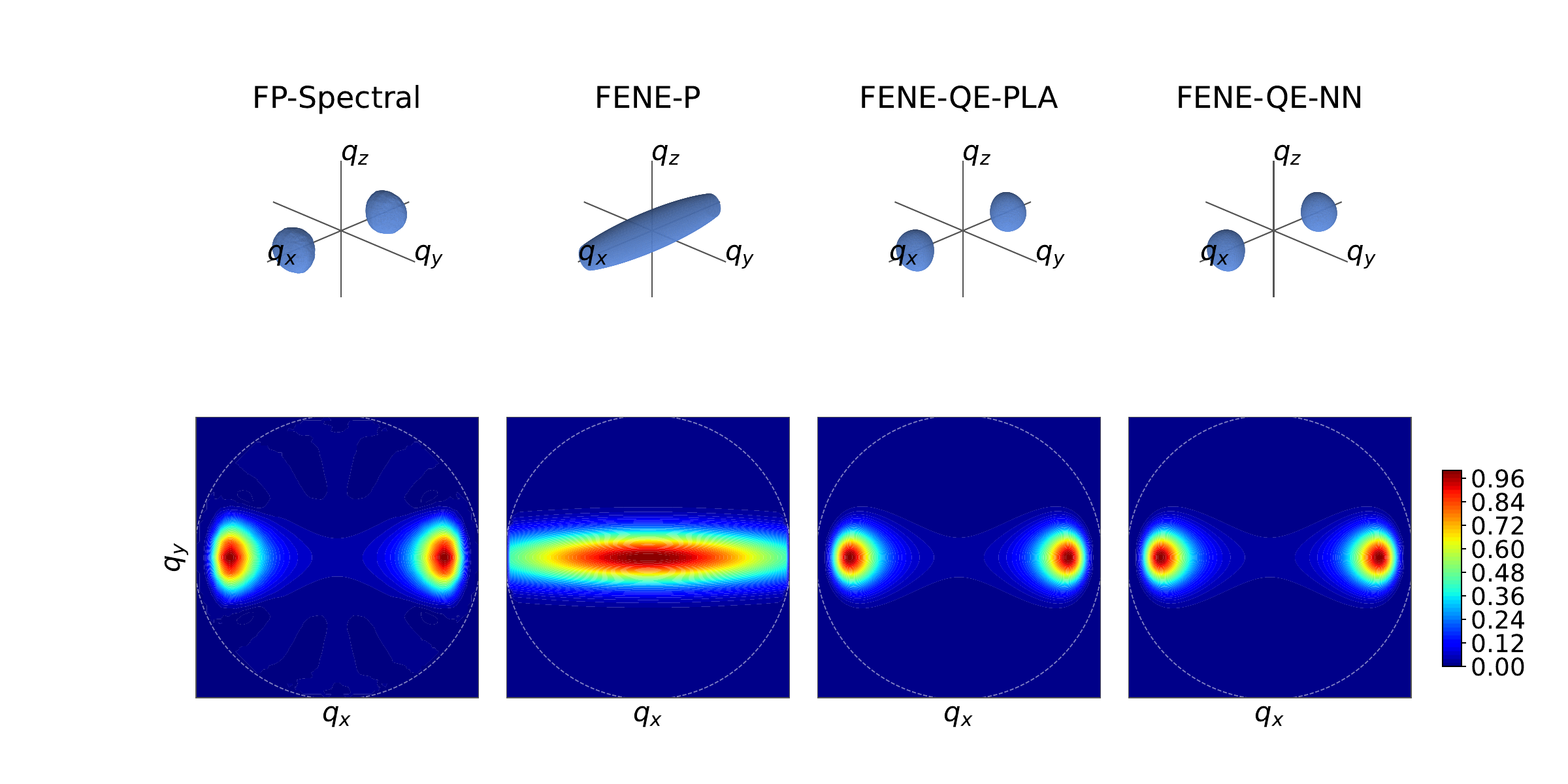}
        \caption{$\mathrm{De}=15.0, \kappa=2.0$}
        \label{fig:de=15}
    \end{subfigure}
    \caption{CDF contours for extensional flow under various conditions.}
    \label{fig:extensional_contours}
\end{figure}

\begin{figure}[htbp]
    \centering
    \begin{subfigure}[b]{0.48\textwidth}
        \includegraphics[width=\textwidth]{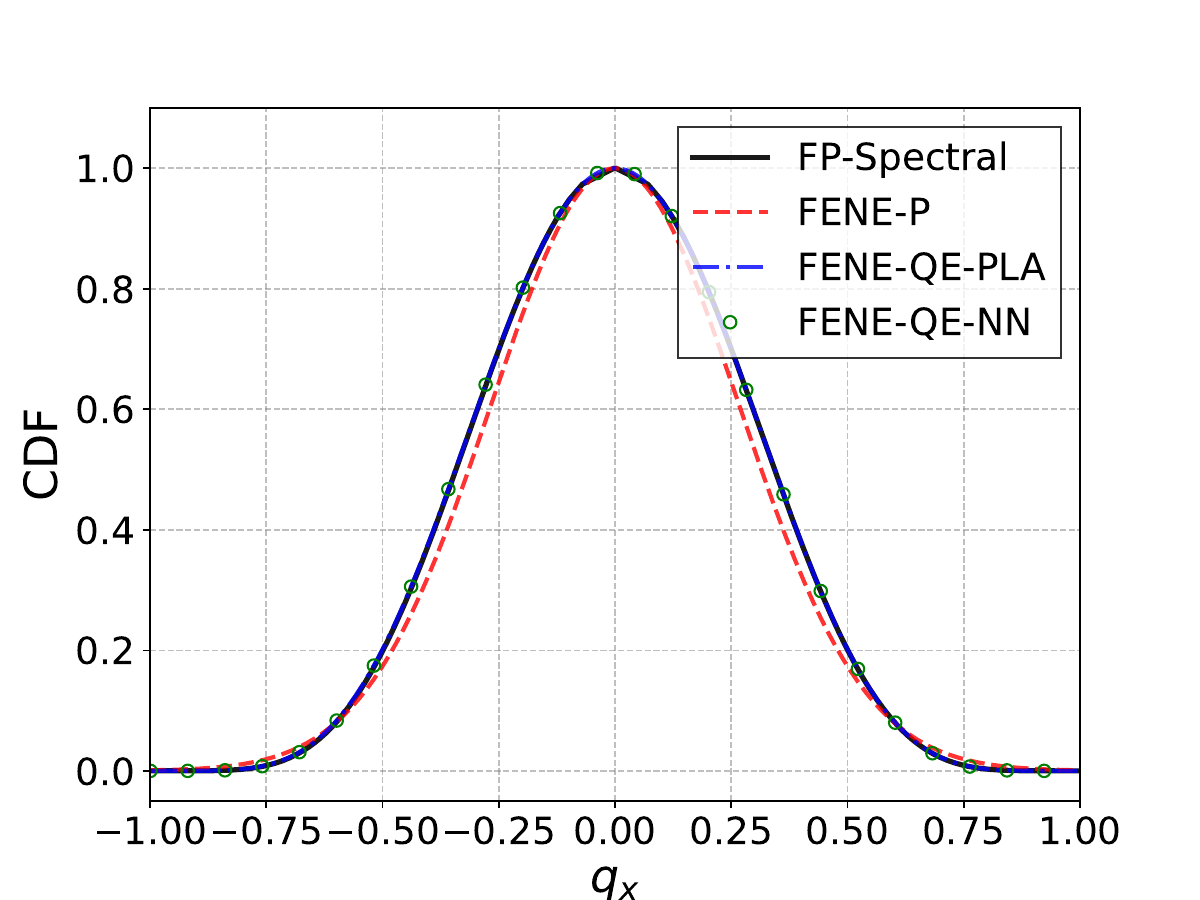}
        \caption{$\mathrm{De}=1, \kappa=1$}
        \label{fig:1Dk=1}
    \end{subfigure}
    \hfill
    \begin{subfigure}[b]{0.48\textwidth}
        \includegraphics[width=\textwidth]{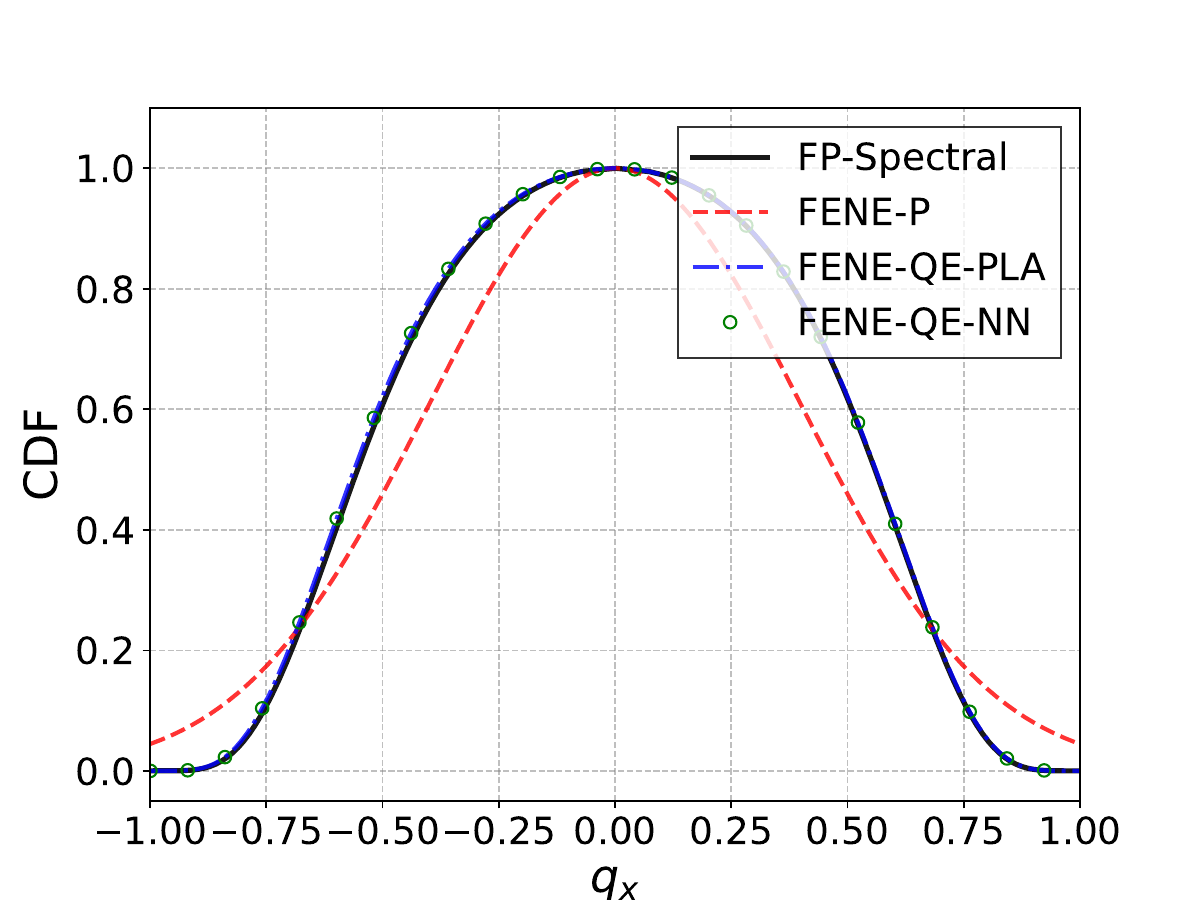}
        \caption{$\mathrm{De}=1, \kappa=10$}
        \label{fig:1Dk=10}
    \end{subfigure}
    \hfill
    \begin{subfigure}[b]{0.48\textwidth}
        \includegraphics[width=\textwidth]{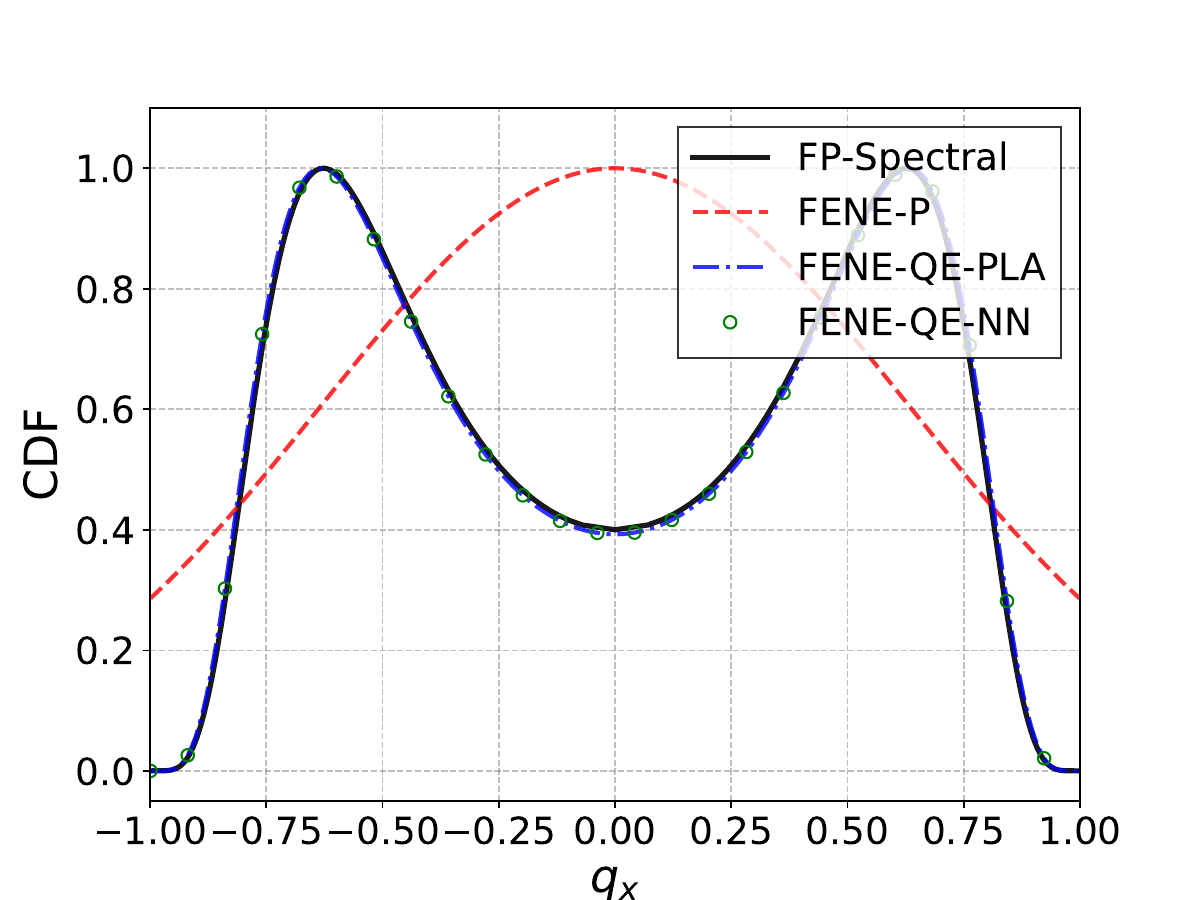}
        \caption{$\mathrm{De}=1, \kappa=20$}
        \label{fig:1Dk=20}
    \end{subfigure}
    \begin{subfigure}[b]{0.48\textwidth}
        \includegraphics[width=\textwidth]{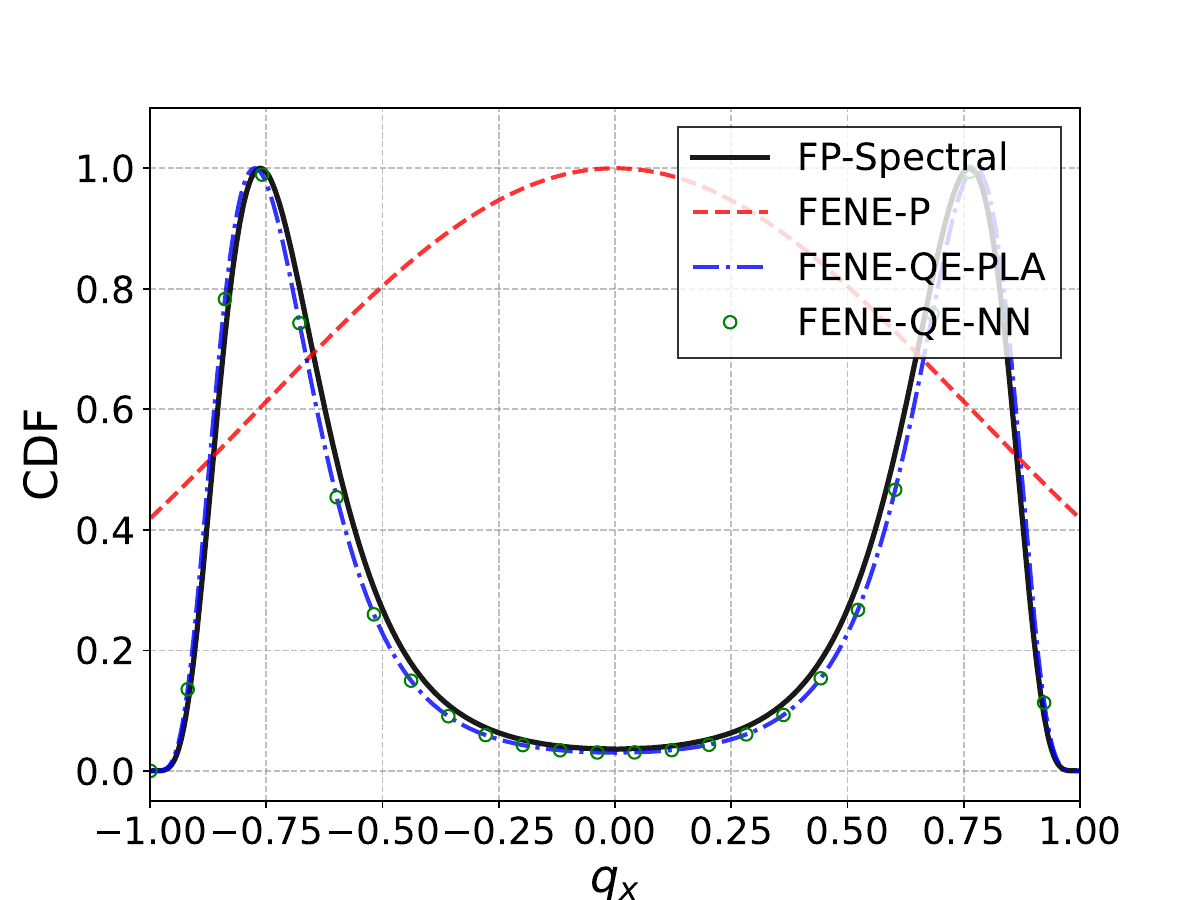}
        \caption{$\mathrm{De}=15, \kappa=2$}
        \label{fig:1Dk=2}
    \end{subfigure}
    \caption{1D cross-sections of the CDF comparing the exact solution with closure models.}
    \label{fig:extensional_slices}
\end{figure}

\subsection{Benchmark 2: Mixed shear and extensional flow and symmetry breaking}
We further introduce a shear component to the flow: $\bm{u}=(x+\kappa y,-y,0)$. The velocity gradient tensor is given by
\begin{equation}
    \bm{K}=
    \begin{pmatrix}
        1 & \kappa & 0 \\
        0 & -1 & 0 \\
        0 & 0 & 0
    \end{pmatrix}.
\end{equation}

\begin{table}[htbp]
    \centering
    \caption{$L^2$ Error of closure models under mixed shear and extensional flow (Fixed $\mathrm{De}=1.0$).}
    \label{tab:error}
    \begin{tabular}{cccc}
        \toprule
        $\kappa$ & \textbf{FENE-P} & \textbf{FENE-QE-PLA} & \textbf{FENE-QE-NN}\\
        \midrule
        1.0 & $6.78 \times 10^{-2}$ & $1.84 \times 10^{-4}$ & $3.36 \times 10^{-4}$\\
        2.0 & $6.81 \times 10^{-2}$ & $4.65 \times 10^{-4}$ & $9.93 \times 10^{-4}$\\
        5.0 & $7.06 \times 10^{-2}$ & $2.18 \times 10^{-3}$ & $2.72 \times 10^{-3}$\\
        10.0 & $7.93 \times 10^{-2}$ & $7.42 \times 10^{-3}$ & $4.68 \times 10^{-3}$\\
        20.0 & $1.12 \times 10^{-1}$ & $3.13 \times 10^{-2}$ & $2.10 \times 10^{-2}$\\
        \bottomrule
    \end{tabular}
\end{table}

\paragraph{Symmetry Breaking}
Table \ref{tab:error} indicates that as the shear rate $\kappa$ increases, the accuracy of the FENE-QE-based models (both PLA and NN) degrades more noticeably compared to the pure extensional case. This phenomenon arises from the intrinsic symmetry limitations of the FENE-QE closure.
The FENE-QE ansatz assumes the distribution function takes the form $f_{\text{QE}} \propto (1-|\bm{q}|^2)^{b/2}\exp(\bm{\lambda}:\bm{q}\bm{q})$. Since $\bm{\lambda}$ is a symmetric tensor, $f_{\text{QE}}$ possesses three mutually orthogonal planes of symmetry defined by the eigenvectors of $\bm{\lambda}$ (or equivalently, $\bm{C}$). However, the exact solution of the Fokker--Planck equation under strong shear flow does not necessarily satisfy this specific symmetry group. As visualized in Figure \ref{fig:shear_results}, the true CDF exhibits a skewness that the quadratic exponential form of FENE-QE cannot fully capture.

\paragraph{Numerical stability and regularization}
We specifically address the numerical stability in the high-Deborah-number limit. We observe that the standard FENE-QE model (via exact integration or PLA) exhibits severe stiffness when the polymer is highly stretched ($\text{tr}(\bm{C}) \to 1$).
Mathematically, this stems from the ill-conditioning of the inverse mapping $\bm{C} \mapsto \bm{\lambda}$. While the forward map is smooth, the inverse map becomes singular near the boundary of the configuration space. As $\text{tr}(\bm{C}) \to 1$, the Lagrange multipliers $\lambda_i \to \infty$, causing the condition number of the Jacobian to diverge:
\begin{equation}
    \kappa(\mathcal{J}) = \left\| \frac{\partial \bm{\lambda}}{\partial \bm{C}} \right\| \left\| \left(\frac{\partial \bm{\lambda}}{\partial \bm{C}}\right)^{-1} \right\| \to \infty.
\end{equation}
Consequently, small perturbations in $\bm{C}$ (e.g., due to time-stepping truncation error) result in massive oscillations in $\bm{\lambda}$.

The FENE-QE-NN model demonstrates superior robustness in this regime. This is attributed to two factors:
\begin{enumerate}
    \item \textbf{Smoothness:} The Tanh activation function enforces $C^{\infty}$ continuity, preventing the $C^0$ kinks found in PLA which can trigger divergence.
    \item \textbf{Implicit regularization:} Neural networks exhibit \textit{spectral bias}, preferentially learning low-frequency functions. In the absence of infinite training data at the singularity, the network tends to smooth out the extreme asymptotic behavior of $\bm{\lambda}$ near the boundary. This acts as an effective regularization, preventing the unphysical oscillations observed in the standard PLA approach.
\end{enumerate}

\begin{figure}[htbp]
    \centering
    \begin{subfigure}[b]{0.48\textwidth}
        \includegraphics[width=\textwidth]{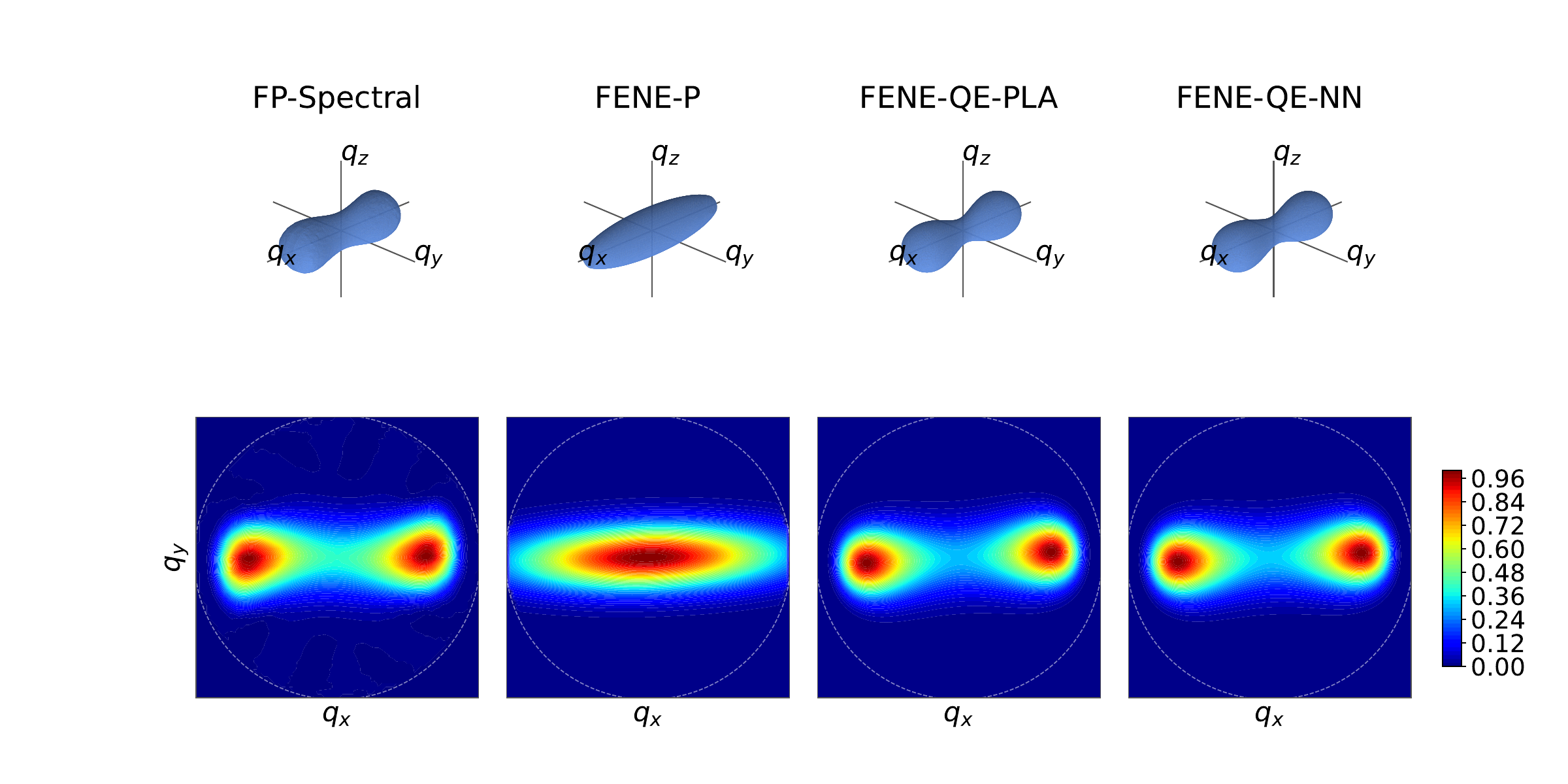}
        \caption{$\mathrm{De}=20.0, \kappa=1.0$}
    \end{subfigure}
    \hfill
    \begin{subfigure}[b]{0.48\textwidth}
        \includegraphics[width=\textwidth]{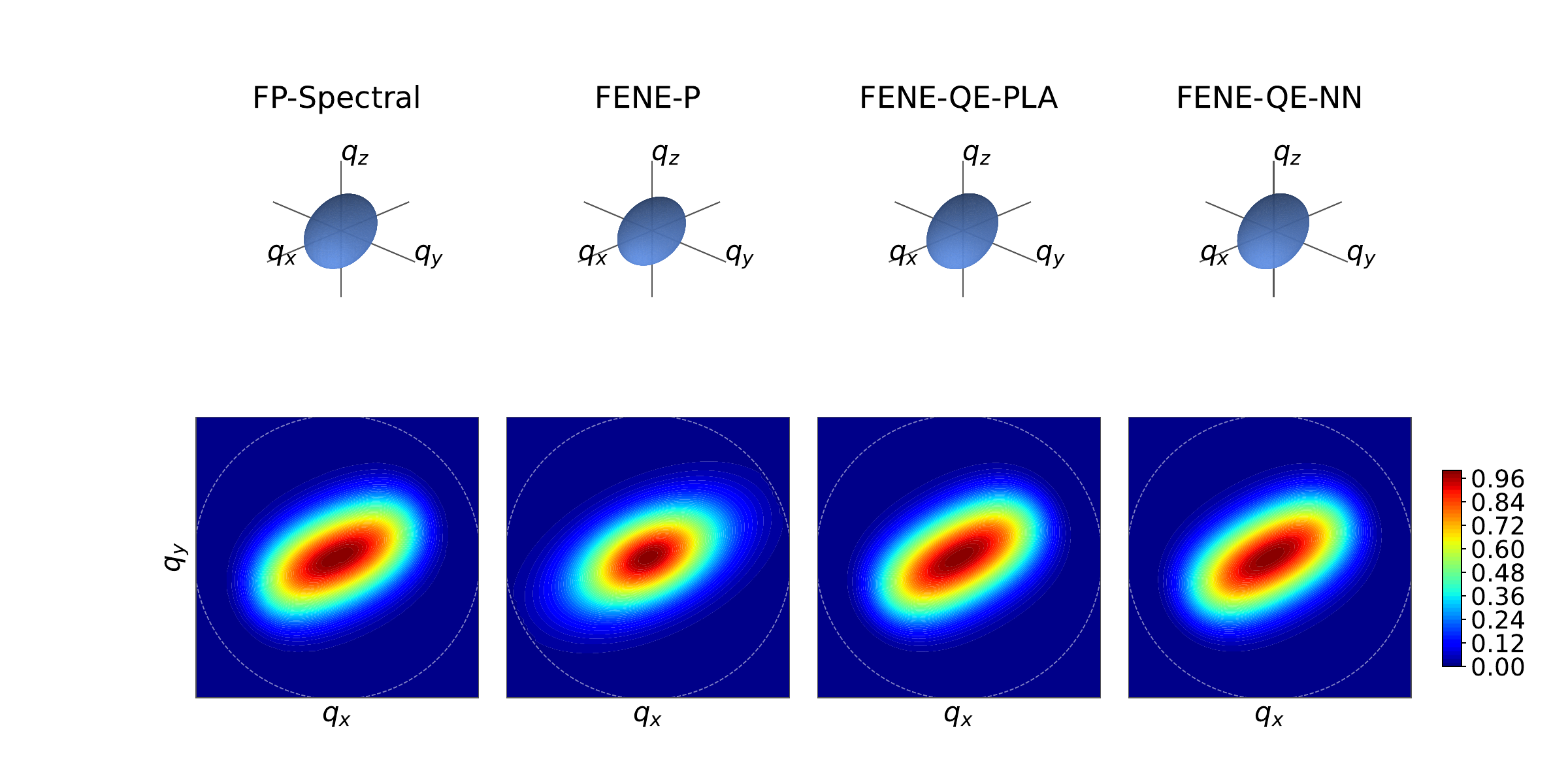}
        \caption{$\mathrm{De}=1.0, \kappa=20.0$}
    \end{subfigure}
    \\
    \begin{subfigure}[b]{0.48\textwidth}
        \includegraphics[width=\textwidth]{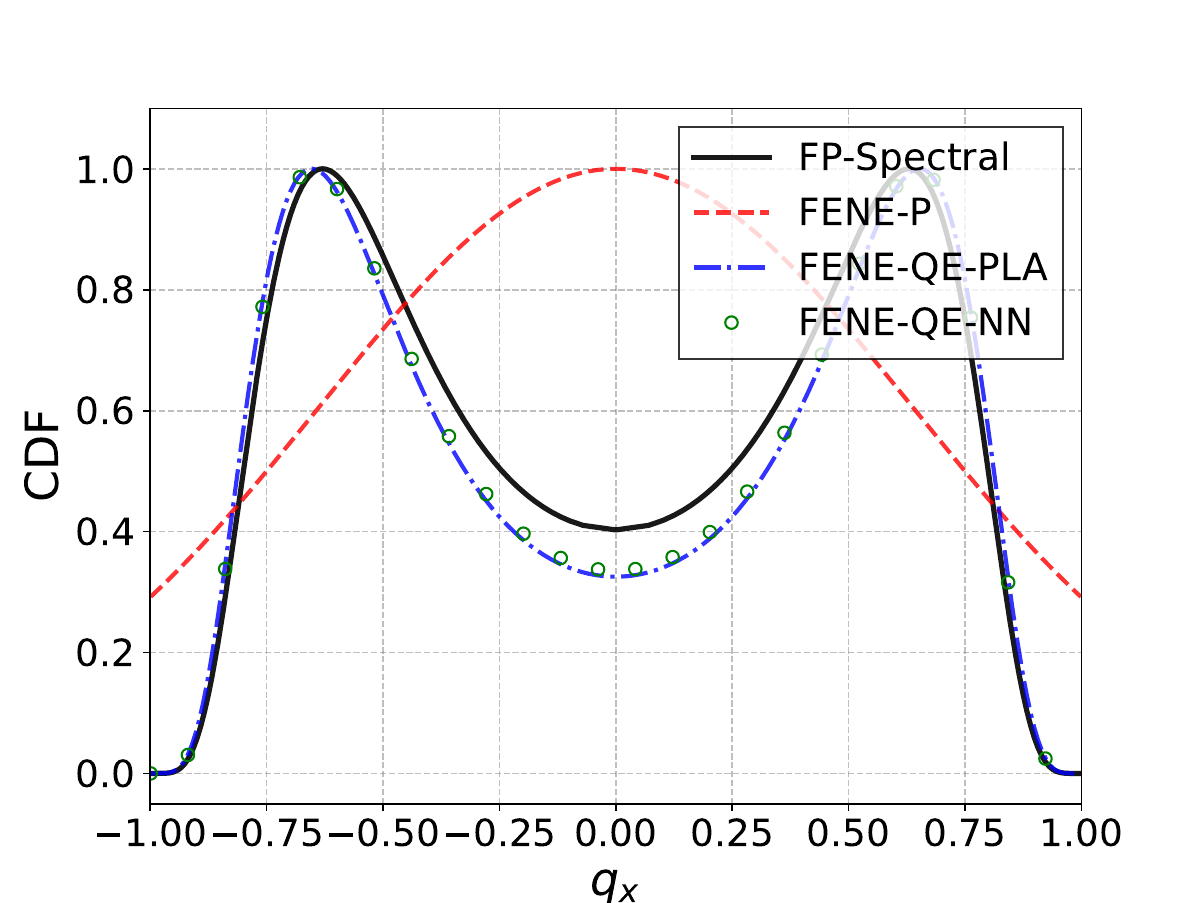}
        \caption{1D Slice(qx): $\mathrm{De}=20, \kappa=1$}
    \end{subfigure}
    \hfill
    \begin{subfigure}[b]{0.48\textwidth}
        \includegraphics[width=\textwidth]{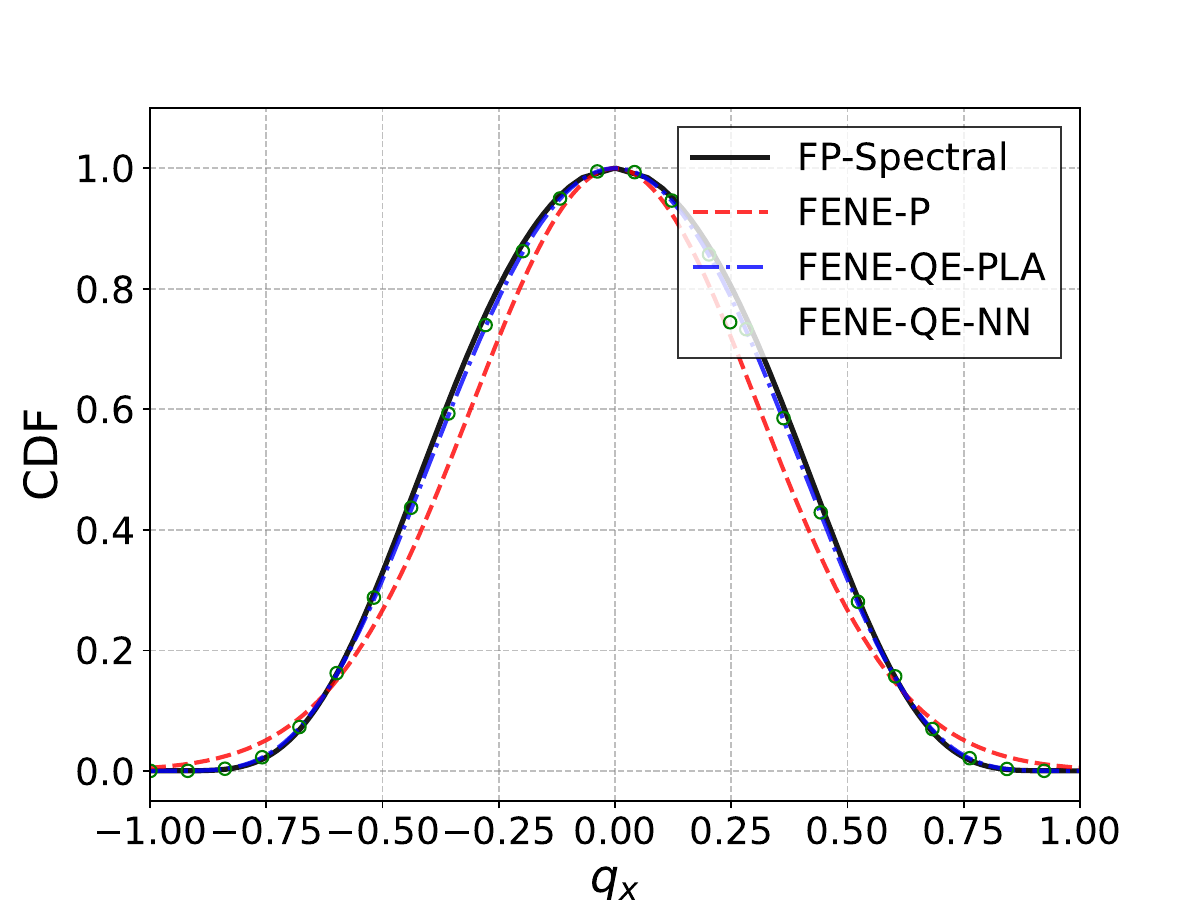}
        \caption{1D Slice(qx): $\mathrm{De}=1, \kappa=20$}
    \end{subfigure}
    \\
    \begin{subfigure}[b]{0.48\textwidth}
        \includegraphics[width=\textwidth]{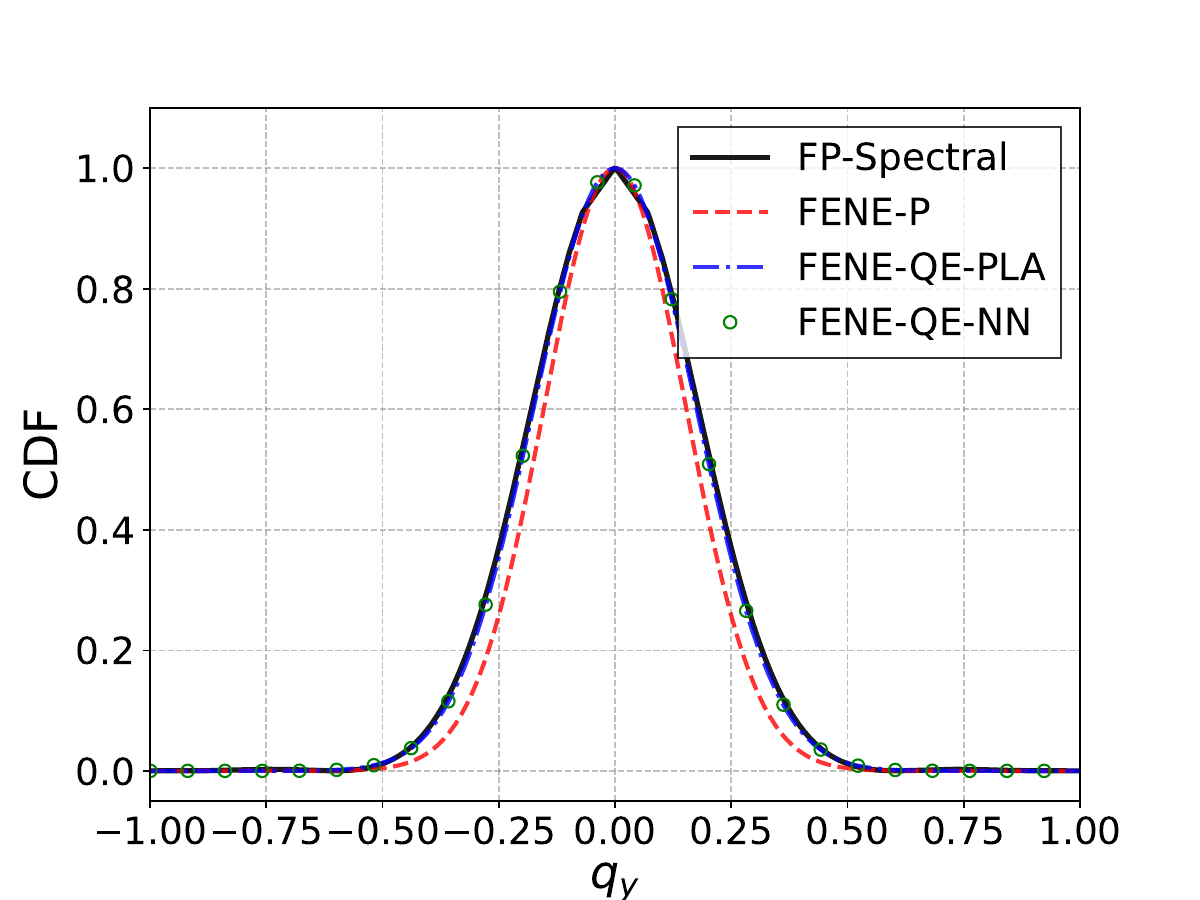}
        \caption{1D Slice(qy): $\mathrm{De}=20, \kappa=1$}
    \end{subfigure}
    \hfill
    \begin{subfigure}[b]{0.48\textwidth}
        \includegraphics[width=\textwidth]{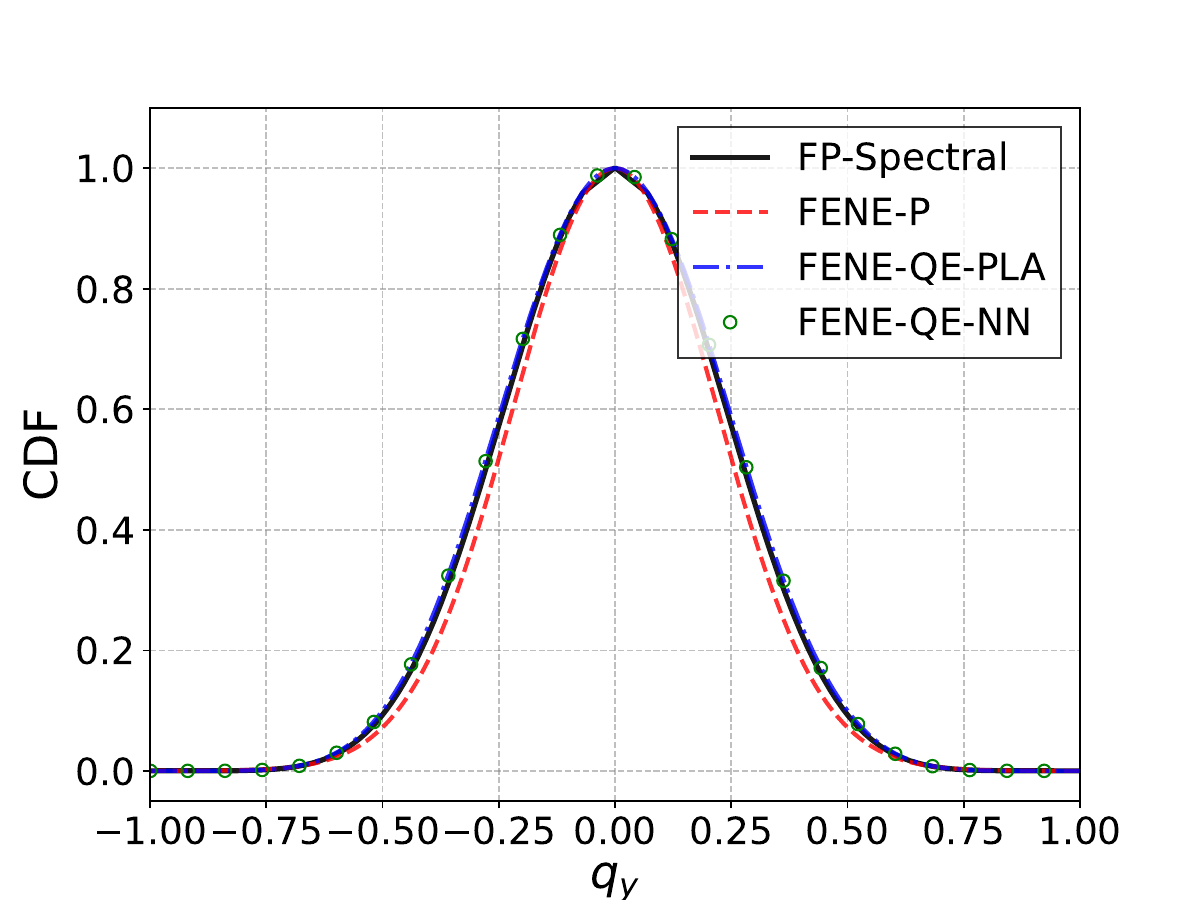}
        \caption{1D Slice(qy): $\mathrm{De}=1, \kappa=20$}
    \end{subfigure}
    \caption{CDF contours and cross-sections for mixed shear and extensional flow.}
    \label{fig:shear_results}
\end{figure}

\subsection{Benchmark 3: Mixed-flow test for FENE-P: microscopic vs.\ macroscopic accuracy}
We assess the performance of the FENE-P closure in a mixed-flow kinematics with the velocity-gradient tensor
\[
\boldsymbol{\kappa}=
\begin{pmatrix}
1 & 1 & 0\\
0 & -1 & 0\\
0 & 0 & 0
\end{pmatrix}.
\]
The errors are evaluated against the reference results, including the CDF \(L^2\) error, the shear stress error in \(\tau_{12}\), and the absolute error of the first normal-stress difference \(N_1\). The results are summarized in Table~\ref{tab:mixedflow_fenep_error}.
Overall, FENE-P exhibits a relatively large microscopic discrepancy: the CDF \(L^2\) error remains above the ``95\%'' accuracy threshold for all tested \(De\). Moreover, the macroscopic error increases markedly in stronger flows (larger \(De\)), especially for \(N_1\) at \(De=10\), highlighting the limitation of the closure in highly elastic regimes. Nevertheless, for moderate elasticity (e.g., \(De=1,2\)), the errors in \(\tau_{12}\) and \(N_1\) are comparatively small and thus remain acceptable for engineering-level predictions where macroscopic quantities are of primary interest.

\begin{table}[t]
\centering
\caption{Errors of FENE-P in mixed flow.}
\label{tab:mixedflow_fenep_error}
\begin{tabular}{c c c c}
\toprule
De & CDF \(L^2\) error & Shear stress error \(\;(\tau_{12})\) & \(N_1\) abs.\ error \\
\midrule
1  & \(6.777661\times10^{-2}\) & \(3.305862\times10^{-3}\) & \(1.661804\times10^{-2}\) \\
2  & \(6.956758\times10^{-2}\) & \(5.581815\times10^{-3}\) & \(3.619042\times10^{-2}\) \\
5  & \(8.304386\times10^{-2}\) & \(8.305585\times10^{-3}\) & \(1.327803\times10^{-1}\) \\
10 & \(1.414949\times10^{-1}\) & \(4.578891\times10^{-3}\) & \(6.120807\times10^{-1}\) \\
\bottomrule
\end{tabular}
\end{table}

\begin{remark}[Numerical Definiteness]
In addition to accuracy limitations, we observed a numerical issue specific to the discrete FENE-P formulation: the computed conformation tensor (second moment) \(\mathbf C\) may lose positive definiteness in some runs. This indicates that standard discretizations of FENE-P do not automatically preserve the SPD property of \(\mathbf C\), and stabilization/positivity-preserving treatments may be required in practice.
\end{remark}

\section{Conclusion}
In this paper, we have established and analyzed a rigorous deterministic framework for simulating dilute polymer solutions within a high-dimensional configuration space. Building upon our previous work, we introduced a fast Spectral-Galerkin method utilizing a basis constructed from mapped Jacobi polynomials and spherical harmonics. This approach effectively resolves the singularity of the FENE probability density function near the maximum extensibility limit while preserving spectral accuracy. We provided a comprehensive mathematical analysis, proving the long-term energy stability of the semi-implicit BDF2 time discretization scheme. Rigorous numerical benchmarks confirm that the proposed algorithm achieves exponential convergence, establishing it as a reliable reference tool for validating macroscopic constitutive models.
Leveraging the high-fidelity data generated by our spectral solver, we conducted a systematic comparative study of closure approximations under complex flow conditions. Our results highlight the limitations of the classic FENE-P model, specifically its inability to capture the bimodal distribution of polymer configurations in strong extensional flows. Despite being less accurate than FENE-QE, the FENE-P model remains widely used in engineering practice due to its simplicity and its reasonably accurate predictions of key macroscopic stresses in weak-to-moderate flow regimes. In contrast, while the FENE-QE model demonstrates superior physical fidelity, we identified critical drawbacks in its standard PLA implementation: high computational cost and severe numerical stiffness arising from the ill-conditioned inverse mapping near the phase space boundary.
To address these challenges, we proposed the FENE-QE-NN model, a data-driven closure that employs a deep neural network to approximate the mapping from the conformation tensor to the Lagrange multipliers. By utilizing smooth activation functions and a hybrid optimization strategy, our model not only retains the thermodynamic consistency and accuracy of the FENE-QE closure but also achieves computational efficiency comparable to the FENE-P model. Furthermore, we demonstrated that the neural network exhibits an intrinsic regularization effect, effectively smoothing the singularity that compromises traditional methods, thereby significantly enhancing numerical robustness in high-Deborah-number regimes.
This work suggests several promising avenues for future research. A natural extension is to apply this framework to non-homogeneous flows, such as 1+3D or 2+3D systems, where spatial variations are significant. In this context, the spectral element method would be an ideal candidate for physical space discretization to maintain high-order accuracy. Secondly, regarding closure approximations, the accuracy of the FENE-QE model could be further improved by incorporating higher-order moment information, such as constraints on fourth-order moments, to better capture non-Gaussian features. Finally, given the demonstrated success of our neural network in solving this challenging inverse problem, we intend to explore more advanced architectures for data-driven model reduction. Our ultimate goal is to develop general-purpose machine learning surrogates that serve as universal constitutive laws for complex fluids, bridging the gap between micro-scale kinetic theory and macro-scale continuum mechanics.

\section*{Acknowledgments}
We would like to acknowledge the financial support from the National Natural Science Foundation of China (grant
no. 12494543, 12171467, W2431008, 12531015) and
the Strategic Priority Research Program of the Chinese Academy of Sciences (grant no. XDA0480504). H. Yu would also like to thank Prof. Nansheng Liu at University of Science and Technology of China for helpful discussions.

\appendix

\section{Formula for Evaluating $D_{ij}$}
\label{app:formula_Dij}

To evaluate $D_{ij}$, we provide subroutines to calculate the angular interaction matrices $U_{ij}$, $V_{ij}$, and $W_{ij}$. We decompose these integrals into polar components ($F$ matrices) and azimuthal components ($E$ matrices).

\subsection{Azimuthal Sub-matrices ($E$)}
The azimuthal integrals involve the basis functions $e_m^\nu(\varphi)$. We define the following sparse sub-matrices:
\begin{align}
    (E_{00})_{\nu m}^{\nu' m'} &= (e_{m}^{\nu}, e_{m'}^{\nu'}) = \delta_{\nu \nu'}\delta_{m m'}, \\
    (E_{01})_{\nu m}^{\nu' m'} &= (e_{m}^{\nu}, e_{m'}^{\nu'} \cos \varphi), \\
    (E_{11})_{\nu m}^{\nu' m'} &= (e_{m}^{\nu}, e_{m'}^{\nu'} \sin \varphi), \\
    (E_{02})_{\nu m}^{\nu' m'} &= (e_{m}^{\nu}, e_{m'}^{\nu'} \cos 2\varphi), \\
    (E_{12})_{\nu m}^{\nu' m'} &= (e_{m}^{\nu}, e_{m'}^{\nu'} \sin 2\varphi).
\end{align}
Additionally, terms involving the derivative $\partial_\varphi$ are defined as:
\begin{align}
    (E_{00}')_{\nu m}^{\nu' m'} &= (e_{m}^{\nu}, \partial_\varphi e_{m'}^{\nu'}), \\
    (E_{01}')_{\nu m}^{\nu' m'} &= (e_{m}^{\nu}, \partial_\varphi e_{m'}^{\nu'} \cos \varphi), \\
    (E_{11}')_{\nu m}^{\nu' m'} &= (e_{m}^{\nu}, \partial_\varphi e_{m'}^{\nu'} \sin \varphi), \\
    (E_{02}')_{\nu m}^{\nu' m'} &= (e_{m}^{\nu}, \partial_\varphi e_{m'}^{\nu'} \cos 2\varphi), \\
    (E_{12}')_{\nu m}^{\nu' m'} &= (e_{m}^{\nu}, \partial_\varphi e_{m'}^{\nu'} \sin 2\varphi).
\end{align}
These matrices are all sparse, with non-zero values only when $m = |m' \pm 1|$ or $m = m', |m' \pm 2|$.

\subsection{Polar Sub-matrices ($F$)}
The polar integrals involve the normalized associated Legendre polynomials $\bar{P}_l^m(\cos\theta)$. We define:
\begin{align}
    (F_{00})_{lm}^{l'm'} &= (\bar{P}_{l}^{m}, \bar{P}_{l'}^{m'})_{\sin\theta}, \\
    (F_{01})_{lm}^{l'm'} &= (\bar{P}_{l}^{m}, \bar{P}_{l'}^{m'} \frac{\cos\theta}{\sin\theta})_{\sin\theta}, \\
    (F_{02})_{lm}^{l'm'} &= (\bar{P}_{l}^{m}, \bar{P}_{l'}^{m'} \cos 2\theta)_{\sin\theta}, \\
    (F_{12})_{lm}^{l'm'} &= (\bar{P}_{l}^{m}, \bar{P}_{l'}^{m'} \sin 2\theta)_{\sin\theta}.
\end{align}
And the derivative terms involving $\partial_\theta$:
\begin{align}
    (F_{00}')_{lm}^{l'm'} &= (\bar{P}_{l}^{m}, \partial_\theta \bar{P}_{l'}^{m'})_{\sin\theta}, \\
    (F_{02}')_{lm}^{l'm'} &= (\bar{P}_{l}^{m}, \partial_\theta \bar{P}_{l'}^{m'} \cos 2\theta)_{\sin\theta}, \\
    (F_{12}')_{lm}^{l'm'} &= (\bar{P}_{l}^{m}, \partial_\theta \bar{P}_{l'}^{m'} \sin 2\theta)_{\sin\theta}.
\end{align}
These matrices depend on $l$ and $l'$ but are independent of $\nu$ and $\nu'$. The values are non-zero only for $|l-l'| \le 2$.

\subsection{Assembly of $U, V, W$}
Using the sub-matrices defined above, the components of the angular matrices are assembled as follows:

\paragraph{1. Matrices $U_{ij}$ (Geometric terms from $r_i r_j$)}
\begin{align*}
    (U_{11}) &= \frac{1}{4}(F_{00}-F_{02})(E_{00}+E_{02}), \\
    (U_{22}) &= \frac{1}{4}(F_{00}-F_{02})(E_{00}-E_{02}), \\
    (U_{33}) &= \frac{1}{2}(F_{00}+F_{02})\delta_{\nu m}^{\nu' m'}, \\
    (U_{12}) &= (U_{21}) = \frac{1}{4}(F_{00}-F_{02})(E_{12}), \\
    (U_{13}) &= (U_{31}) = \frac{1}{2}(F_{12})(E_{01}), \\
    (U_{23}) &= (U_{32}) = \frac{1}{2}(F_{12})(E_{11}).
\end{align*}

\paragraph{2. Matrices $V_{ij}$ (Terms with $\partial_\theta$)}
\begin{align*}
    (V_{11}) &= \frac{1}{4}(F_{12}')(E_{00}+E_{02}), \\
    (V_{22}) &= \frac{1}{4}(F_{12}')(E_{00}-E_{02}), \\
    (V_{33}) &= -\frac{1}{2}(F_{12}')\delta_{\nu m}^{\nu' m'}, \\
    (V_{12}) &= (V_{21}) = \frac{1}{4}(F_{12}')(E_{12}), \\
    (V_{13}) &= \frac{1}{2}(F_{00}'+F_{02}')(E_{01}), \\
    (V_{23}) &= \frac{1}{2}(F_{00}'+F_{02}')(E_{11}), \\
    (V_{31}) &= -\frac{1}{2}(F_{00}'-F_{02}')(E_{01}), \\
    (V_{32}) &= -\frac{1}{2}(F_{00}'-F_{02}')(E_{11}).
\end{align*}

\paragraph{3. Matrices $W_{ij}$ (Terms with $\partial_\varphi$)}
\begin{align*}
    (W_{11}) &= -\frac{1}{2}(F_{00})(E_{12}'), \\
    (W_{22}) &= -W_{11}, \\
    (W_{3j}) &= 0, \\
    (W_{12}) &= -\frac{1}{2}(F_{00})(E_{00}' - E_{02}'), \\
    (W_{21}) &= \frac{1}{2}(F_{00})(E_{00}' + E_{02}'), \\
    (W_{13}) &= -(F_{01})(E_{11}'), \\
    (W_{23}) &= (F_{01})(E_{01}').
\end{align*}

Since all constituent matrices $E$ and $F$ are block-banded with bandwidths of at most 2, the resulting matrices $U, V, W$ are guaranteed to be sparse.

\bibliographystyle{plain}
\bibliography{references}

\end{document}